\newtheorem{theorem}{Theorem}[section]
\newtheorem{prop}[theorem]{Proposition}
\newtheorem{lemma}[theorem]{Lemma}
\theoremstyle{definition}
\newtheorem{remark}{Remark}
\newcounter{tenumerate}
\def\P{\mathbb{P}}
\newcommand{\deq}{\stackrel{\scriptscriptstyle\triangle}{=}}
\renewcommand{\epsilon}{\varepsilon}
\DeclareMathOperator{\var}{Var}
\newcommand{\E}{{\mathbb E}}
\newcommand{\remove}[1]{}
\renewcommand{\leq}{\leqslant}
\renewcommand{\geq}{\geqslant}
\def\XXint#1#2#3{{\setbox0=\hbox{$#1{#2#3}{\int}$}
\vcenter{\hbox{$#2#3$}}\kern-.5\wd0}}
\def\plus{\mathsf{+}}
\def\minus{\mathsf{-}}
\def\zero{\mathsf{0}}
\title{Exponential decay of correlations in the two-dimensional random field Ising model at zero temperature}
\author{Jian Ding\thanks{Partially supported by NSF grant DMS-1757479 and an Alfred Sloan fellowship.}  \\ University of Pennsylvania \and Jiaming Xia\footnotemark[1]  \\ University of Pennsylvania}
\begin{document}

\maketitle 

\begin{abstract}
We study random field Ising model on $\mathbb Z^2$ where the external field is given by i.i.d.\ Gaussian variables with mean zero and positive variance.  We show that  at zero temperature the effect of boundary conditions on the magnetization in a finite box decays exponentially in the distance to the boundary.
\end{abstract}

\section{Introduction}

For $v\in \mathbb Z^2$, let $h_v$ be i.i.d.\ Gaussian variables with mean zero and variance $\epsilon^2>0$.
We consider random field Ising model (RFIM) with external field $\{h_v: v\in \mathbb Z^2\}$ at zero temperature. 
For $N \geq 1$, let $\Lambda_N = \{v\in \mathbb Z^2: |v|_\infty \leq N\}$ be a box in $\mathbb Z^2$ centered at the origin $o$ and of side length $2N$. For any set $A\subset \mathbb Z^2$, define $\partial A = \{v\in \mathbb Z^2\setminus A: u\sim v\mbox{ for some } u\in  A\}$. The RFIM Hamiltonian $H^{\pm}$ on the configuration space $\{-1, 1\}^{\Lambda_N}$ with plus (respectively, minus) boundary condition and external field $\{h_v: v\in \Lambda_N\}$ is defined to be
\begin{equation}\label{eq-def-mu}
H^\pm(\sigma) = - \big(\sum_{u\sim v, u, v\in \Lambda_N} \sigma_u \sigma_v  \pm \sum_{u\sim v, u\in  \Lambda_N, v\in \partial \Lambda_N} \sigma_u + \sum_{u\in \Lambda_N} \sigma_u h_u\big) \mbox{ for } \sigma \in \{-1, 1\}^{\Lambda_N}\,.
\end{equation}
With probability 1 there exists a unique minimizer, known as the \emph{ground state}, which we denote by $\sigma^{\Lambda_N, +}$ (with respect to the plus-boundary condition) and $\sigma^{\Lambda_N,-}$ (with respect to the minus-boundary condition).
Our main result is the following theorem.
\begin{theorem}\label{thm-main}
For any $\epsilon>0$, there exists $c_\epsilon>0$ such that  $\P(\sigma^{\Lambda_N, +}_o \neq \sigma^{\Lambda_N, -}_o) \leq c_\epsilon^{-1} e^{-c_\epsilon N}$ for all $N\geq 1$.
\end{theorem}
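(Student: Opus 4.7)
The plan is to combine monotonicity of ground states with a quantitative analysis of the random field on disagreement regions, organized as a multi-scale induction. The ferromagnetic nature of \eqref{eq-def-mu} implies $\sigma^{\Lambda_N,+} \geq \sigma^{\Lambda_N,-}$ pointwise almost surely, so the event $\{\sigma^{\Lambda_N,+}_o \neq \sigma^{\Lambda_N,-}_o\}$ coincides with $\{\sigma^{\Lambda_N,+}_o = +1,\ \sigma^{\Lambda_N,-}_o = -1\}$, and the connected component $C$ of the disagreement set $D = \{v \in \Lambda_N : \sigma^{\Lambda_N,+}_v = +1,\ \sigma^{\Lambda_N,-}_v = -1\}$ containing $o$ must reach $\partial \Lambda_N$. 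To extract an energy constraint on $C$, I would flip $\sigma^{\Lambda_N,+}$ to $-1$ on $C$, and separately flip $\sigma^{\Lambda_N,-}$ to $+1$ on $C$, and use optimality of the two ground states together with the fact that the bulk interactions inside $C$ cancel. Combining the two resulting inequalities yields a deterministic bound in which the Gaussian sum $\sum_{v \in C} h_v$ is controlled by a boundary quantity essentially equal to the edge perimeter of $C$ in $\Lambda_N$ minus its contact with $\partial \Lambda_N$.

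A direct Peierls bound cannot close this inequality: a cluster of area $k$ in $\mathbb Z^2$ has perimeter $\Theta(\sqrt{k})$, its field sum has standard deviation $\epsilon\sqrt{k}$, and the number of cluster shapes is already $\exp(c\sqrt{k})$, precisely the Imry--Ma borderline. I would therefore run a multi-scale induction on dyadic scales $L_0 < L_1 < \cdots < L_K = N$: let $p_k$ denote the probability that a disagreement path crosses a box of side $L_k$, with the theorem corresponding (essentially) to bounding $p_K$. The aim is a recursion $p_{k+1} \leq \rho\, p_k$ for some $\rho < 1$ depending only on $\epsilon$, obtained from the observation that a crossing at scale $L_{k+1}$ forces disjoint crossings at scale $L_k$ inside several sub-boxes through which the disagreement cluster must pass, combined with independence of the random fields in disjoint sub-boxes. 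Iterating this recursion down from scale $N$ yields the claimed exponential decay $e^{-c_\epsilon N}$.

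The main difficulty lies in making the inductive step work at the Imry--Ma borderline. Two coupled obstacles must be addressed. First, the disagreement cluster is not a freely chosen geometric object but is selected globally by energy minimization, so its shape is entangled with the random field; decoupling the selection from the Gaussian randomness inside each sub-box likely requires conditioning on the field in thick annuli separating the scales together with a coupling argument for ground states with modified boundary data. Second, the per-scale probabilistic gain must come from genuinely new Gaussian fluctuations along \emph{macroscopic} portions of the disagreement contour at each scale; identifying these portions and proving that their field sums pool independently across scales is where the finest technical work will sit. Controlling the interplay between these two points --- the optimization-induced correlations and the delicate Gaussian accounting at the Imry--Ma borderline --- is where I expect the real difficulty of the proof to lie.
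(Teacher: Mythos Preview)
Your proposal correctly isolates the setup (monotonicity, the disagreement cluster $C$ reaching $\partial\Lambda_N$, the flipping inequalities) and correctly diagnoses why a direct Peierls bound stalls at the Imry--Ma borderline. But the proposal stops precisely at the hard part: you write that the inductive step requires ``genuinely new Gaussian fluctuations along macroscopic portions of the disagreement contour at each scale'' and that ``controlling the interplay \ldots\ is where I expect the real difficulty of the proof to lie'' --- and then end. That is a description of the obstacle, not a mechanism for overcoming it. A recursion $p_{k+1}\le \rho\,p_k$ of the type you sketch is essentially what \cite{AP18} achieves, and it yields only polynomial decay; nothing in your outline explains how to push $\rho$ below a fixed constant independent of scale once you are at the borderline, or how to gain enough per scale to reach exponential decay.

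The paper's route supplies exactly the missing idea, and it is not the ``field sums along contour portions'' picture you have in mind. The key device is a \emph{perturbation argument}: tilt every $h_v$ by a constant $\Delta$ and compare the two disagreement sets $\mathcal C^{\Lambda_N}$ and $\tilde{\mathcal C}^{\Lambda_N}$. A deterministic lemma (Lemma~\ref{lem-perturbation}) shows that if the intersection $\mathcal C_*^{\Lambda_N}$ has sufficiently long internal geodesics between $\partial\Lambda_{N/4}$ and $\partial\Lambda_{N/2}$, then a quantitative balance between $|\mathcal C_*^{\Lambda_N}\cap\Lambda_{N/4}|$ and $|\mathcal C_*^{\Lambda_N}\cap\mathcal A_{N/2}|$ is forced. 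Applied first with $\Delta\sim 1/N$ (where the tilted law is contiguous to the original), this gives a uniform upper bound $<1$ on annulus crossing probabilities, which via Aizenman--Burchard \cite{AB99} implies that geodesics in $\mathcal C^{\Lambda_N}$ have length at least $N^\alpha$ for some $\alpha>1$ (Proposition~\ref{prop-crossing-dimension}). This superlinearity is what breaks the Imry--Ma tie: one can now rerun the perturbation with $\Delta\sim N^{-\beta}$ for some $1<\beta<\alpha$, and the Gaussian accounting (conditioning on everything except the total field over an annulus, so that $\{o\in\mathcal C^{\Lambda_N}\}$ becomes an interval event of controlled length for a single Gaussian variable) yields a genuine per-scale gain. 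This gives $m_N\le CN^{-3}$, after which a standard finite-range percolation argument bootstraps to exponential decay. Your outline contains none of these three ingredients (the tilt comparison, the geodesic exponent $\alpha>1$, the interval-for-a-single-Gaussian reduction), and without at least the second one there is no visible way to get off the borderline.
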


This result lies under the umbrella of the general Imry--Ma \cite{IM75} phenomenon, which states that in two-dimensional systems any first order transition is rounded off upon the introduction of arbitrarily weak static, or quenched,
disorder in the parameter conjugate to the corresponding extensive quantity. In the particular case of RFIM, it was shown in \cite{AW89, AW90} that \emph{for all non-negative temperatures} the effect on the local quenched magnetization of the boundary conditions
at distance $N$ away decays to 0 as $N\to \infty$, which also implies the uniqueness of the Gibbs state. The decay rate was then improved to $1/\log\log N$ in \cite{Chatterjee18} and to $1/N^\gamma$ (for some $\gamma>0$) in \cite{AP18}. All these results apply for arbitrarily weak disorder.  In the presence of high disorder it has been shown that there is an exponential decay \cite{Ber85, FI84, CJN18} (see also \cite[Appendix A]{AP18}). The main remaining challenge is to decide whether the decay rate is exponential when the disorder is weak. In fact,  there have been debates even among physicists on whether there exist regimes where the decay rate is polynomial, and weak supporting arguments have been made in both directions \cite{GMS82, BK87, DS84} --- in particular in \cite{DS84} an argument was made for polynomial decay \emph{at zero temperature} for a certain choice of disorder.
 Our contribution is to prove  exponential decay, \emph{for any $\epsilon>0$ at zero temperature}. The natural remaining question is to try to prove an analogue of Theorem~\ref{thm-main} at positive temperatures.
 
The two-dimensional behavior of RFIM is drastically different from that for dimensions three and higher: it was shown in \cite{Imbrie85} that at zero temperature
the effect on the local quenched magnetization of the boundary conditions at distance $N$ does not vanish in $N$ in the presence of weak disorder, and later an analogous result was proved  in \cite{BK88} at low temperatures. 

Our proof method is different from all of \cite{AW90, Chatterjee18, AP18} (and different from \cite{Ber85, FI84, CJN18}), except that in the heuristic level our proof seems to be related to the Mandelbrot percolation analogy presented in \cite[Appendix B]{AP18}. The works \cite{AW89, AW90} treated a wide class of distributions for disorder, while \cite{Chatterjee18, AP18} and the current paper work with Gaussian disorder. The main features of Gaussian distributions used in the present article are the simple formula for the change of measure (see \eqref{eq-change-of-measure}) and linear decompositions for Gaussian process (see \eqref{eq-Gaussian-conditioning}).

\section{Outline of the proof}

We first reformulate Theorem~\ref{thm-main}.
For $v\in  \Lambda_N$, we define 
\begin{equation}\label{eq-def-xi}
\xi_v^{\Lambda_N} = 
\begin{cases}
\plus, & \mbox{ if } \sigma^{\Lambda_N, +}_v = \sigma^{\Lambda_N, -}_v = 1\,,\\
\minus, & \mbox{ if } \sigma^{\Lambda_N, +}_v = \sigma^{\Lambda_N, -}_v = -1\,,\\
\zero, & \mbox{ if }   \sigma^{\Lambda_N, +}_v = 1 \mbox{ and } \sigma^{\Lambda_N, -}_v = -1\,.
\end{cases}
\end{equation}
By monotonicity (c.f. \cite[Section 2.2]{AP18}), the case of  $\sigma^{\Lambda_N, +}_v = -1 \mbox{ and } \sigma^{\Lambda_N, -}_v = 1$ cannot occur, so $\xi_v^{\Lambda_N}$ is well-defined for all $v\in \Lambda_N$.  Theorem~\ref{thm-main} can be restated as 
\begin{equation}\label{eq-main-result}
m_N \leq c_\epsilon^{-1} e^{-c_\epsilon N} \mbox{ for } c_\epsilon>0, \mbox{ where } m_N \deq \P(\xi_{o}^{\Lambda_N}=\zero) \,.
\end{equation}
For any $A\subset \mathbb Z^2$,  we can analogously define $\xi^A$ by replacing $\Lambda_N$ with $A$ in \eqref{eq-def-mu} and \eqref{eq-def-xi}.  Let $\mathcal C^{A} = \{v\in A: \xi_v^{A} = \zero\}$.
Monotonicity (see \cite[(2.7)]{AP18}) implies that
\begin{equation}\label{eq-monotonicity}
\mathcal C^{B} \cap B' \subset \mathcal C^{B'} \mbox{ provided that } B' \subset B\,.
\end{equation}
In particular, this implies that $m_N$ is decreasing in $N$, so we need only consider $N=2^n$ for $n\geq 1$.
Clearly, for any $v\in \mathcal C^A$, there exists a path in $\mathcal C^A$ joining $v$ and $\partial A$. This suggests consideration of percolation properties of $\mathcal C^A$. Indeed, a key step in our proof for \eqref{eq-main-result} is the following proposition on the lower bound on the length exponent for geodesics (i.e., shortest paths) in $\mathcal C^{\Lambda_N}$. For any $A\subset \mathbb Z^2$, we denote by $d_A (\cdot, \cdot)$ the graph distance on the induced subgraph on $A$. 
\begin{prop}\label{prop-crossing-dimension}
There exist $\alpha = \alpha(\epsilon) > 1$, $\kappa = \kappa(\epsilon)>0$ such that for all $N\geq 1$
\begin{equation}\label{eq-box-counting}
\P(d_{\mathcal C^{\Lambda_N}} (\partial \Lambda_{N/4}, \partial \Lambda_{N/2}) \leq N^{\alpha}) \leq \kappa^{-1} e^{- N^{\kappa}}\,.
\end{equation}
\end{prop}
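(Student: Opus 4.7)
My plan would be to argue by induction over dyadic scales $N = 2^n$, bootstrapping the length lower bound on geodesics in $\mathcal{C}^{\Lambda_N}$ from the trivial Euclidean bound up to the super-linear $N^\alpha$, using the Gaussian change-of-measure formula \eqref{eq-change-of-measure} together with the scale decomposition \eqref{eq-Gaussian-conditioning} in the inductive step.

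The starting point is a deterministic structural observation about $\mathcal{C}^{\Lambda_N}$. Each vertex $v \in \mathcal{C}^{\Lambda_N}$ lies in a disagreement component $K(v)$ of $\sigma^{\Lambda_N, +}$ vs.\ $\sigma^{\Lambda_N, -}$. Comparing the two energies on $K(v)$ (each ground state being optimal under its own boundary condition, and the two configurations agreeing on $K(v)^c$) yields an approximate balance between the dual-contour perimeter of $K(v)$ and $\big|\sum_{u \in K(v)} h_u\big|$, modulo boundary contributions. A long crossing $\gamma \subset \mathcal{C}^{\Lambda_N}$ thus forces a large contour length paired with a matching field imbalance, which is a Gaussian-rare event.

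The main probabilistic step is to bound $\P(\gamma \subset \mathcal{C}^{\Lambda_N})$ for a fixed candidate self-avoiding path $\gamma$ of length $L$ from $\partial \Lambda_{N/4}$ to $\partial \Lambda_{N/2}$. I would apply \eqref{eq-change-of-measure} with the shift $\tau = \delta \, \mathbf{1}_\gamma$ (possibly thickened to a small neighborhood of $\gamma$, weighted by a Green's-function-type profile), for a small $\delta = \delta(\epsilon) > 0$. Under the shifted measure, the added positive bias on $\gamma$ combined with the monotonicity \eqref{eq-monotonicity} drives $\sigma^{\Lambda_N, -}$ toward $+1$ along $\gamma$, so the shifted probability of $\gamma \subset \mathcal{C}^{\Lambda_N}$ is small; the Radon-Nikodym cost is $\exp(O(L))$ since $\|\tau\|_2^2 = O(L)$. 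A union bound over the $O(N^2\cdot 4^L)$ candidate paths then produces a bound of the form $\exp(-c_\epsilon L)$, provided the gain from the shift beats the $4^L$ prefactor -- and the extra gain is supplied by the inductive hypothesis at a smaller scale.

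To upgrade the length exponent from $1$ to $\alpha > 1$, I would iterate the above across $\Theta(\log N)$ dyadic scales: use \eqref{eq-Gaussian-conditioning} to decompose $h = h^{\mathrm{coarse}} + h^{\mathrm{fine}}$ into independent parts, condition on $h^{\mathrm{coarse}}$, and apply the shift construction within each scale. The accumulated probability gives $\P \leq e^{-N^\kappa}$. \emph{The main obstacle} is that $\mathcal{C}^{\Lambda_N}$ is defined by a global optimization, so shifting the field along $\gamma$ may simply re-route the disagreement cluster to a nearby path rather than destroy it; controlling this rerouting is the heart of the argument and requires combining monotonicity \eqref{eq-monotonicity}, the energy balance, and the inductive hypothesis at sub-scales to ensure that a re-routed crossing is itself an improbable event at the smaller scale.
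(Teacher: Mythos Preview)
Your approach has a genuine gap that cannot be closed along the lines you sketch: the union bound over self-avoiding paths $\gamma$ is exactly the Peierls-type argument that works at \emph{high} disorder (as in \cite{Ber85,FI84,CJN18}) and breaks down at small $\epsilon$. The number of candidate paths of length $L$ from $\partial\Lambda_{N/4}$ to $\partial\Lambda_{N/2}$ grows like $\mu^L$ with connective constant $\mu>1$, so you need a per-path bound $\P(\gamma\subset\mathcal C^{\Lambda_N})\le e^{-cL}$ with $c>\log\mu$. Your Gaussian shift $\tau=\delta\mathbf 1_\gamma$ has Radon--Nikodym cost $\exp(\delta^2 L/2\epsilon^2)$, so any gain it buys is at most $\exp(-O_\epsilon(\delta^2)L)$, and for small $\epsilon$ no choice of $\delta$ makes this beat the entropy. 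The ``extra gain from the inductive hypothesis'' you allude to has no mechanism here: at smaller scales the same entropy--energy imbalance recurs. The rerouting obstacle you flag is also real: shifting on $\gamma$ changes $h$ on a one-dimensional set, whereas the disagreement component $K(v)\supset\gamma$ may be two-dimensional, so $\delta|\gamma\cap K|$ is negligible compared to the energy balance $|h_K|\asymp|\partial K|$ and the ground state barely moves.

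The paper avoids both problems by never enumerating paths. Instead it (i) perturbs by a \emph{uniform} shift $\Delta=\gamma/N$ on all of $\Lambda_N$, so the Radon--Nikodym derivative is controlled by the single Gaussian $h_{\Lambda_N}$ and is $O(1)$ (Lemma~\ref{lem-perturbation-contiguous}); (ii) uses a deterministic perturbation lemma (Lemma~\ref{lem-perturbation}) comparing $|\mathcal C_*\cap\Lambda_{N/4}|$ to $|\mathcal C_*\cap\mathcal A_{N/2}|$ to show that the probability of a $\zero$-circuit in a fixed annulus is bounded away from $1$ (Lemma~\ref{lem-bound-hard-crossing}); (iii) feeds this crossing bound into the Aizenman--Burchard machinery \cite{AB99} to obtain some $\alpha>1$ with $\P(d_{\mathcal C^{\Lambda_N}}(\partial\Lambda_{N/4},\partial\Lambda_{N/2})\le N^\alpha)\to 0$; and (iv) upgrades this $o(1)$ to $e^{-N^\kappa}$ by a one-step coarse-graining (Lemma~\ref{lem-enhance}) at scale $N'=N^{1-(\alpha-1)/10}$, using that short crossings at scale $N$ force a lattice animal of $\gtrsim N/N'$ boxes each exhibiting the rare event at scale $N'$. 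The point is that Aizenman--Burchard converts a \emph{single} crossing-probability bound (uniform in scale) into the exponent $\alpha>1$, bypassing any path enumeration; this is the missing idea in your proposal.
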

The proof of Proposition~\ref{prop-crossing-dimension} will rely on \cite{AB99}, which takes  the next lemma as input. For any rectangle $A\subset \mathbb R^2$ (whose sides are not necessarily parallel to the axes), let $\ell_A$ be the length of the longer side and let $ A^{\mathrm{Large}}$ be the square box concentric with $A$ and of side length $32\ell_A$. In addition, define the aspect ratio of $A$ to be the ratio between the lengths of the longer and shorter sides. For a (random) set $\mathcal C \subset \mathbb Z^2$, we use $\mathrm{Cross}(A, \mathcal C)$ to denote the event that there exists a path $v_0, \ldots, v_k \in A \cap \mathcal C$ connecting the two shorter sides of $A$ (that is,  $v_0, v_k$ are of $\ell_\infty$-distances less than 1 respectively from the two shorter sides of $A$).
\begin{lemma}\label{lem-assumption}
Write $a = 100$. There exists $\ell_0 = \ell_0(\epsilon)$ and $\delta = \delta(\epsilon) >0$ such that the following holds for any $N\geq 1$. For any $k\geq 1$ and any rectangles $A_1, \ldots, A_k \subseteq \Lambda_{N/2}$ with aspect ratios at least $a$ such that  (a) $\ell_0\leq \ell_{A_i} \leq N/32$ for all $1\leq i\leq k$ and (b) $A^{\mathrm{Large}}_1, \ldots, A^{\mathrm{Large}}_k$ are disjoint, we have
$$\P(\cap_{i=1}^k \mathrm{Cross}(A_i, \mathcal C^{\Lambda_N})) \leq (1-\delta)^k\,.$$
\end{lemma}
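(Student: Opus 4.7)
The plan is to combine the Gaussian change of measure \eqref{eq-change-of-measure} with the disjointness of $A_1^{\mathrm{Large}},\ldots,A_k^{\mathrm{Large}}$ to decouple the crossings and reduce the lemma to a single-rectangle ``blocker'' estimate.

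First, for each rectangle $A_i$ I would construct a deterministic shift $\phi_i\in\R^{\Lambda_N}$ supported in $A_i^{\mathrm{Large}}$, together with a positive-probability event $W_i$ that is measurable with respect to the shifted field $(h+\phi_i)|_{A_i^{\mathrm{Large}}}$, such that on $W_i$ the crossing $\mathrm{Cross}(A_i,\mathcal C^{\Lambda_N})$ fails. The natural attempt is to pick a transversal wall $S_i\subset A_i^{\mathrm{Large}}$ separating the two short sides of $A_i$ in the induced subgraph, set $\phi_i = K\mathbf{1}_{S_i}$ for a large constant $K=K(\epsilon)$, and let
\[
W_i=\{\,h_v+\phi_i(v)\ge K_0\ \text{for every }v\in S_i\,\},
\]
with $K_0$ chosen so that $h_v>K_0$ forces $\sigma_v=1$ in every ground state, regardless of the surrounding configuration and boundary condition (a single vertex with large enough field is $+1$ in every minimizer by a Peierls-type comparison). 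On $W_i$ every $v\in S_i$ has $\sigma^{\Lambda_N,+}_v=\sigma^{\Lambda_N,-}_v=1$, so $S_i\cap\mathcal C^{\Lambda_N}=\emptyset$ and no path in $A_i\cap\mathcal C^{\Lambda_N}$ can link the two short sides of $A_i$.

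Set $\phi=\sum_i\phi_i$ and let $Q$ denote the law of the shifted field $h+\phi$ under $\P$. The disjointness of the $A_i^{\mathrm{Large}}$ makes $W_1,\ldots,W_k$ mutually independent under $Q$, each of $Q$-probability at least some $\delta_0=\delta_0(\epsilon)>0$, so
\[
Q\Bigl(\bigcap_{i=1}^k\mathrm{Cross}(A_i,\mathcal C^{\Lambda_N})\Bigr)\le Q\Bigl(\bigcap_{i=1}^k W_i^c\Bigr)\le (1-\delta_0)^k.
\]
Applying Cauchy--Schwarz with the Radon--Nikodym derivative from \eqref{eq-change-of-measure} yields
\[
\P\Bigl(\bigcap_{i=1}^k\mathrm{Cross}(A_i,\mathcal C^{\Lambda_N})\Bigr)\le \sqrt{Q\Bigl(\bigcap_i\mathrm{Cross}(A_i,\mathcal C^{\Lambda_N})\Bigr)}\cdot\sqrt{\E_Q\bigl[(d\P/dQ)^2\bigr]}\le (1-\delta_0)^{k/2}\cdot e^{\|\phi\|_2^2/(2\epsilon^2)}.
\]
Since $\|\phi\|_2^2=\sum_i\|\phi_i\|_2^2$ by disjoint supports, the desired $(1-\delta)^k$ bound follows as soon as $\|\phi_i\|_2^2/\epsilon^2$ can be made uniformly smaller than $\log\frac{1}{1-\delta_0}$.

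The hard part is exactly this last uniform cost bound on the single-rectangle blocker. The wall $S_i$ must vertex-separate the two short sides of $A_i$ in the induced planar subgraph, so by Menger's theorem $|S_i|$ is at least the short dimension of $A_i$, which can grow like $\ell_i/a$ and is unbounded in $\ell_i$; a pointwise constant shift of size $K$ therefore gives $\|\phi_i\|_2^2\gtrsim K^2\ell_i/a$, far too large. Overcoming this is where I would exploit the generous buffer $A_i^{\mathrm{Large}}\setminus A_i$ (of linear size $32\ell_i$, vastly larger than the short dimension of $A_i$) together with the linear decomposition \eqref{eq-Gaussian-conditioning} of the Gaussian field: the idea is to first condition on a low-dimensional linear summary of $\{h_v\}_{v\in A_i^{\mathrm{Large}}}$ and, on the positive-probability event that this summary is favorable enough to create a large bulk of $+1$ spins enveloping a transversal curve, use the residual Gaussian together with a smoothed, $L^2$-controlled perturbation to close up a working wall at cost $O_\epsilon(1)$. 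Constructing such a geometry-aware, constant-cost blocker is what I expect to be the main technical obstacle; once it is in place the decoupling via disjoint supports and Cauchy--Schwarz outlined above is routine, and $\ell_0=\ell_0(\epsilon)$ is chosen so that the construction goes through for all $\ell_{A_i}\ge\ell_0$.
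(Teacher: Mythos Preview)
Your proposal has a genuine gap, and you have correctly located it: the ``constant-cost blocker'' is the whole problem, and nothing you write indicates how to build one. A transversal wall $S_i$ must have at least $\ell_{A_i}/a$ vertices, and forcing $\sigma_v=+1$ pointwise requires a shift of size $O(1)$ at each $v$, so $\|\phi_i\|_2^2$ is of order $\ell_{A_i}$ and the Cauchy--Schwarz factor $e^{\|\phi\|_2^2/(2\epsilon^2)}$ blows up. Your suggestion to ``condition on a low-dimensional linear summary'' and use a ``smoothed, $L^2$-controlled perturbation'' does not come with a mechanism: a shift of bounded $L^2$-norm supported on $A_i^{\mathrm{Large}}$ has typical pointwise size $O(1/\ell_{A_i})$, far too small to pin any individual spin, so you cannot hope to produce a deterministic blocking wall this way. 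Something qualitatively different is needed.

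The paper's argument differs from yours in two essential respects. First, the decoupling is obtained for free from monotonicity \eqref{eq-monotonicity}: since $A_i^{\mathrm{Large}}\subset\Lambda_N$, one has $\mathcal C^{\Lambda_N}\cap A_i\subset\mathcal C^{A_i^{\mathrm{Large}}}$, hence $\mathrm{Cross}(A_i,\mathcal C^{\Lambda_N})\subset\mathrm{Cross}(A_i,\mathcal C^{A_i^{\mathrm{Large}}})$, and the latter events are genuinely independent because the $A_i^{\mathrm{Large}}$ are disjoint. No Cauchy--Schwarz is needed; the lemma reduces to the single-rectangle bound \eqref{eq-crossing-prob}. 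Second, and more importantly, the single-rectangle bound is \emph{not} obtained by building a blocking wall. Instead one shifts the entire box by $\Delta$ of order $1/\ell_A$ (so $\|\phi\|_2^2=O(1)$, which is why Lemma~\ref{lem-perturbation-contiguous} gives contiguity) and compares the disagreement sets $\mathcal C^{\Lambda_N}$ and $\tilde{\mathcal C}^{\Lambda_N}$ before and after the shift. The perturbation Lemma~\ref{lem-perturbation} is a deterministic statement that these two sets cannot agree on too large a region; combined with a pigeonhole over translates (Lemma~\ref{lem-bound-hard-crossing}) this forces, with probability bounded away from zero, the failure of a hard or easy annulus crossing, which is then converted into \eqref{eq-crossing-prob} by the standard RSW-style gluing in Figure~\ref{figure}. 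The point is that an $O(1/\ell_A)$ shift cannot pin any spin, but it \emph{can} tip the global energy balance of a large connected piece of $\mathcal C^{\Lambda_N}$, and that is what the argument exploits.
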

(Actually, the authors of \cite{AB99} treated random curves in $\mathbb R^2$. However, the main capacity analysis can be copied in the discrete case, and the connection between the capacity and the box-counting dimension is straightforward (c.f. \cite[Lemma 2.3]{DT16}).) Armed with Lemma~\ref{lem-assumption}, we can apply \cite[Theorem 1.3]{AB99} to deduce that for some $\alpha = \alpha(\epsilon)>1$,
\begin{equation}\label{eq-box-counting-weak}
\P(d_{\mathcal C^{\Lambda_N}} (\partial \Lambda_{N/4}, \partial \Lambda_{N/2}) \leq N^{\alpha}) \to 0 \mbox{ as } N \to \infty\,.
\end{equation}
By a standard percolation argument (Lemma~\ref{lem-enhance}) which we will explain later, we can enhance the probability decay in \eqref{eq-box-counting-weak} and prove \eqref{eq-box-counting}.

By \eqref{eq-monotonicity},
the random set
$\mathcal C^{\Lambda_N} \cap A$ is stochastically dominated by  $\mathcal C^{A^{\mathrm{Large}}} \cap A$ as long as $A^{\mathrm{Large}}\subset \Lambda_N$. Moreover, it is obvious that $\mathcal C^{A^{\mathrm{Large}}_i}$ for $1\leq i\leq k$ are mutually independent, as long as the sets $A_i^{\mathrm{Large}}$ for $1\leq i\leq k$ are disjoint. Therefore, in order to prove Lemma~\ref{lem-assumption}, it suffices to show that for any rectangle $A$ with aspect ratio at least $a = 100$ we have

\begin{equation}\label{eq-crossing-prob}
\P(\mathrm{Cross}(A, \mathcal C^{A^{\mathrm{Large}}})) \leq 1- \delta \mbox{ where } \delta=  \delta(\epsilon) >0\,.
\end{equation}

Both the proof of \eqref{eq-crossing-prob} and the application of \eqref{eq-box-counting} rely on a perturbative analysis, which is another key feature of our proof. Roughly speaking, the logic is as follows:
\begin{itemize}
\item We first consider the perturbation by increasing the field by an amount of order $1/N$, and use this to show that the probability for a $\zero$-valued contour surrounding an annulus is strictly bounded away from 1.
\item Based on this property, we prove  \eqref{eq-crossing-prob}, which then implies \eqref{eq-box-counting}.
\item Given \eqref{eq-box-counting}, i.e., that the length exponent for the geodesic is at least $\alpha>1$, we then show that increasing the field by an amount of order $1/N^\alpha$ will most likely change the $\zero$'s to $\plus$'s. Based on this, we prove polynomial decay for $m_N$ with large power, which can then be enhanced to exponential decay.
\end{itemize}
For compactness of exposition, the actual implementation will differ slightly from the above plan:
\begin{itemize}
\item We first prove a general perturbation result in Section~\ref{sec-perturbation}, where the size of perturbation is related to the graph distance on the induced graph on $\mathcal C^{\Lambda_N}$.
\item In Section~\ref{sec-crossing-dimension}, we apply Lemma~\ref{lem-perturbation} by  bounding  $d_{\mathcal C^{\Lambda_N}}$ from below by the $\ell_1$-distance and correspondingly setting the perturbation amount to $1/N$, thereby proving  Lemma~\ref{lem-bound-hard-crossing}. As a consequence, we verify \eqref{eq-crossing-prob}.
\item In Section~\ref{sec-main-thm}, we apply Lemma~\ref{lem-perturbation} again by applying a lower bound on  $d_{\mathcal C^{\Lambda_N}}$ from Proposition~\ref{prop-crossing-dimension}. This allows us to derive Lemma~\ref{lem-m-star}. As a consequence, we prove in Lemma~\ref{lem-m-N-bound}  polynomial decay for $m_N$ with large power, which is then  enhanced to exponential decay by a standard argument. 
\end{itemize}

\section{A perturbative analysis}\label{sec-perturbation}

We first introduce some notation. For $A\subseteq \mathbb Z^2$, we set $h_A = \sum_{v\in A} h_v$. For $A, B \subset \mathbb Z^2$, we denote by $E(A, B) = \{\langle u, v\rangle: u\sim v, u\in A, v\in B\}$. Note that we treat $\langle u, v\rangle$ as an ordered edge. For simplicity, we will only consider $N = 2^n$ for $n\geq 10$. Let $\mathcal A_N = \Lambda_N \setminus \Lambda_{N/2}$ be an annulus.   In what follows, we will denote $\{\tilde h^{(N)}_v: v\in \Lambda_N\}$ as various perturbations of the original field, whose meaning will depend on the context. In all situations  we will use  $\tilde H^\pm(\sigma)$, $\tilde \sigma^{\Lambda_N, \pm}$, $\tilde \xi^{\Lambda_N}$, $\tilde {\mathcal C}^{\Lambda_N}$ to denote the corresponding tilde versions of $H^\pm(\sigma)$, $\sigma^{\Lambda_N, \pm}$, $\xi^{\Lambda_N}$, ${\mathcal C}^{\Lambda_N}$, i.e., defined analogously but with respect to the field $\{\tilde h^{(N)}_v\}$. In addition, define $\mathcal C_*^{\Lambda_N} = \tilde {\mathcal C}^{\Lambda_N} \cap  {\mathcal C}^{\Lambda_N}$. 
\begin{lemma}\label{lem-perturbation}
Consider $K, \Delta > 0$. Define 
\begin{equation}\label{eq-def-tilde-h}
\tilde h^{(N)}_v  = 
h_v + \Delta \mbox{ for } v\in \Lambda_N\,.
\end{equation}

The following two conditions cannot hold simultaneously:

(a) $d_{\mathcal C_*^{\Lambda_N}}(\partial \Lambda_{N/4}, \partial \Lambda_{N/2}) \geq K$;

(b) $|\mathcal C_*^{\Lambda_N} \cap \Lambda_{N/4}| \cdot \Delta> \frac{8}{K} |\mathcal C_*^{\Lambda_N} \cap \mathcal A_{N/2}|$.
\end{lemma}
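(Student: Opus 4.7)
The plan is to derive a contradiction from (a) and (b) via an energy–minimisation argument. The two minimisers I will use are $\sigma^{\Lambda_N,+}$ (for the original Hamiltonian $H^+$) and $\tilde\sigma^{\Lambda_N,-}$ (for the perturbed Hamiltonian $\tilde H^-$). For any finite $T\subseteq\mathcal C_*^{\Lambda_N}\cap\Lambda_{N/2}$, I will compare these minimisers with flipped test configurations: $\bar\sigma^{-}$ obtained from $\tilde\sigma^{\Lambda_N,-}$ by flipping every spin in $T$ from $-1$ to $+1$, and $\bar\sigma^{+}$ obtained from $\sigma^{\Lambda_N,+}$ by flipping every spin in $T$ from $+1$ to $-1$; both flips are well-defined since $T\subseteq\mathcal C^{\Lambda_N}\cap\tilde{\mathcal C}^{\Lambda_N}$. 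Computing the two energy increments (edges internal to $T$ contribute nothing since both endpoints flip together, and the far boundary $\partial\Lambda_N$ plays no role because $T\subseteq\Lambda_{N/2}$) and adding the resulting inequalities $\tilde H^-(\bar\sigma^-)\geq\tilde H^-(\tilde\sigma^{\Lambda_N,-})$ and $H^+(\bar\sigma^+)\geq H^+(\sigma^{\Lambda_N,+})$, the difference of field sums collapses to $|T|\Delta$ and we obtain the master bound
\[
|T|\Delta \;\leq\; \sum_{v\in T}\sum_{u\sim v,\, u\notin T}\bigl(\sigma^{\Lambda_N,+}_u - \tilde\sigma^{\Lambda_N,-}_u\bigr).
\]

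Next I will control the right-hand side. Using the field monotonicity $\tilde\sigma^{\Lambda_N,\pm}\geq\sigma^{\Lambda_N,\pm}$, a short case analysis on whether $u$ lies in $\mathcal C^{\Lambda_N}$ and/or $\tilde{\mathcal C}^{\Lambda_N}$ shows that $\sigma^{\Lambda_N,+}_u-\tilde\sigma^{\Lambda_N,-}_u$ equals $2$ exactly when $u\in\mathcal C_*^{\Lambda_N}$ and is non-positive otherwise. Specialising $T$ to the graph ball
\[
S_r \;:=\; \{u\in\mathcal C_*^{\Lambda_N}:d_{\mathcal C_*^{\Lambda_N}}(u,\mathcal C_*^{\Lambda_N}\cap\Lambda_{N/4})\leq r\},
\]
so that $S_0=\mathcal C_*^{\Lambda_N}\cap\Lambda_{N/4}$ and the neighbours of $S_r$ sitting in $\mathcal C_*^{\Lambda_N}\setminus S_r$ are exactly the elements of $S_{r+1}\setminus S_r$, and using that each such vertex has at most four neighbours in $S_r$, the master bound reduces (whenever $S_r\subseteq\Lambda_{N/2}$) to
\[
|S_r|\Delta \;\leq\; 8\,|S_{r+1}\setminus S_r|, \qquad r=0,1,\ldots,K-1.
\]

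The geometric confinement $S_K\subseteq\Lambda_{N/2}$ is where condition (a) enters: any $u\in S_K\setminus\Lambda_{N/2}$ would lie on a $\mathcal C_*^{\Lambda_N}$-path of length $\leq K$ to some $v\in\mathcal C_*^{\Lambda_N}\cap\Lambda_{N/4}$, but the subpath between its last crossing of $\partial\Lambda_{N/2}$ and its first crossing of $\partial\Lambda_{N/4}$ already has length $\geq K$ by (a), with a strictly positive extra step required to reach the interior vertex $v$, a contradiction. Consequently $S_K\setminus S_0\subseteq\mathcal C_*^{\Lambda_N}\cap\mathcal A_{N/2}$, and summing the previous inequality over $r=0,\ldots,K-1$, telescoping, and using $|S_r|\geq|S_0|$ yields
\[
K\cdot|\mathcal C_*^{\Lambda_N}\cap\Lambda_{N/4}|\cdot\Delta \;\leq\; \sum_{r=0}^{K-1}|S_r|\Delta \;\leq\; 8(|S_K|-|S_0|) \;\leq\; 8\,|\mathcal C_*^{\Lambda_N}\cap\mathcal A_{N/2}|,
\]
directly contradicting (b). I expect the main obstacle to be the careful bookkeeping that delivers the clean master inequality—particularly the monotonicity case-analysis killing the contributions from $u\notin\mathcal C_*^{\Lambda_N}$—together with the geometric argument confining $S_K$ to $\Lambda_{N/2}$; once both are in hand, the telescoping step and the final contradiction with (b) are essentially automatic.
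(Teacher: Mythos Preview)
Your proof is correct and follows essentially the same approach as the paper: flip a $d_{\mathcal C_*^{\Lambda_N}}$-ball around $\mathcal C_*^{\Lambda_N}\cap\Lambda_{N/4}$ and combine the minimality of $\sigma^{\Lambda_N,+}$ for $H^+$ with that of $\tilde\sigma^{\Lambda_N,-}$ for $\tilde H^-$, using monotonicity to see that only boundary edges landing in $\mathcal C_*^{\Lambda_N}$ contribute positively. The one organizational difference is that the paper selects a single radius $k_*$ by pigeonhole (finding a thin layer $B_{k_*}$ and flipping once), whereas you apply your master inequality at every radius $0\le r\le K-1$ and telescope; both routes yield the same final inequality contradicting (b).
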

\begin{proof}
Suppose otherwise both (a) and (b) hold. Let  $B_k = \{v\in \mathcal A_{N/2}: d_{\mathcal C_*^{\Lambda_N}}(\partial \Lambda_{N/4}, v) = k\}$, for $k=1, \ldots, K$. Note that $B_k \subset \mathcal C_*^{\Lambda_N} \cap \mathcal A_{N/2}$ for all $1\leq k\leq K$ by (a). It is obvious that the $B_k$'s are disjoint from each other, and thus there exists a minimal value $k_*$ such that 
\begin{equation}\label{eq-B-*}
|B_{k_*}| \leq K^{-1}| \mathcal C_*^{\Lambda_N} \cap \mathcal A_{N/2}|\,.
\end{equation}
 Let 
$$S =(\mathcal C_*^{\Lambda_N} \cap \Lambda_{N/4} ) \cup \cup_{k=1}^{k^*-1} B_k\,,$$
and for $\tau \in \{\minus, \zero, \plus\}$, define
\begin{equation}\label{eq-definition-g}
g(S, \tau) = \{\langle u, v\rangle \in E(S, S^c): \xi^{\Lambda_N}_v = \tau \} \mbox{ and } \tilde g(S, \tau) = \{\langle u, v\rangle \in E(S, S^c): \tilde \xi^{\Lambda_N}_v = \tau \}\,.
\end{equation}
Note that for any $v\in \Lambda_N$ with $\xi_v^{\Lambda_N} = \zero$ we have $\sigma^{+, \Lambda_N}_v = 1$. Since $\xi^{\Lambda_N}_{v} = \zero$ for $v\in S$ (which implies that $\sigma^{\plus, \Lambda_N}_v = 1$ for $v\in S$), 
\begin{equation}\label{eq-S-condition}
h_S + |g(S, \plus)| - |g(S, \minus)| + |g(S, \zero)| \geq 0\,,
\end{equation}
because if \eqref{eq-S-condition} does not hold, then $H^\plus(\sigma') < H^\plus(\sigma^{\plus, \Lambda_N})$ where $\sigma'$ is obtained from $\sigma^{\plus, \Lambda_N}$ by flipping its value on $S$, thus contradicting the minimality of $H^+(\sigma^{\plus, \Lambda_N})$. In addition, by monotonicity (with respect to the external field), we have $g(S, \zero) \subset \tilde g(S, \zero)\cup \tilde g(S, \plus)$, $g(S, \plus) \subset \tilde g (S, \plus)$, and thus
$$|\tilde g(S, \plus)| - |g(S, \plus)| \geq | g(S, \zero) \setminus \tilde g(S, \zero)|\,.$$
Similarly we have $\tilde g(S, \minus) \subset g(S, \minus)$ and $\tilde g(S, \zero) \subset g(S, \minus) \cup g(S, \zero)$, and thus
$$|g(S, \minus)| - |\tilde g(S, \minus)| \geq |\tilde g(S, \zero) \setminus g(S, \zero)|\,.$$
 By our definition of $B_k$'s, we see that $\tilde g(S, \zero) \cap g(S, \zero) = E(S, B_{k_*})$. Therefore, \eqref{eq-S-condition} and the preceding two displays imply that
\begin{align*}
\tilde h^{(N)}_S + |\tilde g(S, \plus)| - |\tilde g(S, \minus)| - |\tilde g(S, \zero)| &\geq \tilde h^{(N)}_S + | g(S, \plus)| - | g(S, \minus)| + |g(S, \zero)| - 2| E(S, B_{k_*})|\\
&\geq |S| \Delta - 8|B_{k_*}| >0\,,
\end{align*}
where the last inequality follows from (b) and \eqref{eq-B-*}. The preceding inequality implies that $\tilde H^-(\sigma') < \tilde H^\minus(\tilde \sigma^{\minus, \Lambda_N})$ where $\sigma'$ is obtained from $\tilde \sigma^{\minus, \Lambda_N}$ by flipping its value on $S$. This contradicts the minimality of $\tilde H^\minus(\tilde \sigma^{\minus, \Lambda_N})$, completing the proof of the lemma.
\end{proof}

\begin{lemma}\label{lem-percolation-property}
For any $x_v \geq 0$ for  $v\in \Lambda_N$, let $\tilde h^{(N)}_v = h_v + x_v$ for $v\in \Lambda_N$. Then with probability 1, for any $v\in \mathcal C_*^{\Lambda_N}$ there is a path in $ \mathcal C_*^{\Lambda_N}$ joining $v$ and $\partial \Lambda_N$.
\end{lemma}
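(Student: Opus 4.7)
The plan is to adapt the standard argument---which shows that each connected component of $\mathcal{C}^{\Lambda_N}$ meets $\partial \Lambda_N$---to the intersection $\mathcal{C}_*^{\Lambda_N} = \mathcal{C}^{\Lambda_N} \cap \tilde{\mathcal{C}}^{\Lambda_N}$. First I would use monotonicity of the ground state in the external field, which gives $\sigma^{\Lambda_N,\pm}_v \leq \tilde\sigma^{\Lambda_N,\pm}_v$ pointwise, to obtain the convenient reformulation
\[
\mathcal{C}_*^{\Lambda_N} \;=\; \{v \in \Lambda_N : \sigma^{\Lambda_N,+}_v = +1 \text{ and } \tilde\sigma^{\Lambda_N,-}_v = -1\},
\]
so that only the single pair $(\sigma^{\Lambda_N,+}, \tilde\sigma^{\Lambda_N,-})$ enters the analysis. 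I then proceed by contradiction: fix $v \in \mathcal{C}_*^{\Lambda_N}$, let $C$ be its connected component in $\mathcal{C}_*^{\Lambda_N}$, and assume that no vertex of $C$ is adjacent to $\partial \Lambda_N$.

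Next I would consider the two test configurations obtained by flipping spins on $C$: let $\sigma'$ equal $\sigma^{\Lambda_N,+}$ except $\sigma'_u = -1$ for $u \in C$, and let $\tilde\sigma'$ equal $\tilde\sigma^{\Lambda_N,-}$ except $\tilde\sigma'_u = +1$ for $u \in C$. Because $h$ and $\tilde h^{(N)}$ both have absolutely continuous laws and there are only finitely many candidate subsets $C \subseteq \Lambda_N$, a union bound shows that almost surely $\sigma^{\Lambda_N,+}$ and $\tilde\sigma^{\Lambda_N,-}$ are the \emph{unique} minimizers of $H^+$ and $\tilde H^-$, respectively, so any bulk flip strictly raises the energy:
\[
H^+(\sigma') > H^+(\sigma^{\Lambda_N,+}) \quad\text{and}\quad \tilde H^-(\tilde\sigma') > \tilde H^-(\tilde\sigma^{\Lambda_N,-}).
\]
A routine expansion of these two energy differences (edges internal to $C$ cancel, and by assumption there are no boundary-edge contributions) reduces them to the bounds $h_C + \sum_w \sigma^{\Lambda_N,+}_w > 0$ and $-\tilde h^{(N)}_C - \sum_w \tilde\sigma^{\Lambda_N,-}_w > 0$, where each sum runs over ordered edges $\langle u,w\rangle$ with $u \in C$ and $w \in \Lambda_N \setminus C$.

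Adding the two inequalities and using $\tilde h^{(N)}_C - h_C = x_C \geq 0$ would yield
\[
\sum_{\langle u,w\rangle:\, u\in C,\, w\in \Lambda_N\setminus C} \bigl(\sigma^{\Lambda_N,+}_w - \tilde\sigma^{\Lambda_N,-}_w\bigr) \;>\; x_C \;\geq\; 0.
\]
But each such $w$ is a neighbor of $C$ lying outside $C$, hence $w \notin \mathcal{C}_*^{\Lambda_N}$, so either $w \notin \mathcal{C}^{\Lambda_N}$ or $w \notin \tilde{\mathcal{C}}^{\Lambda_N}$. A short case analysis, again using pointwise monotonicity, shows that in each scenario one must have $\sigma^{\Lambda_N,+}_w = -1$ or $\tilde\sigma^{\Lambda_N,-}_w = +1$, so every summand is $\leq 0$, contradicting the displayed strict lower bound. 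The main point to watch is the sign bookkeeping in the two Hamiltonian differences together with the simultaneous almost-sure uniqueness of the ground state for both $h$ and $\tilde h^{(N)}$; once these are in place, the contradiction falls out immediately.
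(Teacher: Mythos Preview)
Your argument is correct and follows essentially the same route as the paper's proof: assume a connected component $C$ of $\mathcal{C}_*^{\Lambda_N}$ fails to reach $\partial\Lambda_N$, compare energies under the flip on $C$, and contradict almost-sure uniqueness of the ground state. The only cosmetic difference is in the bookkeeping: the paper flips only $\tilde\sigma^{\Lambda_N,-}$ and tracks the boundary edges via the classification $g(S,\tau),\tilde g(S,\tau)$ together with the inclusions $g(S,\zero)\cup g(S,\plus)\subset\tilde g(S,\plus)$ and $\tilde g(S,\zero)\cup\tilde g(S,\minus)\subset g(S,\minus)$, whereas you first observe the clean reformulation $\mathcal{C}_*^{\Lambda_N}=\{v:\sigma^{\Lambda_N,+}_v=1,\ \tilde\sigma^{\Lambda_N,-}_v=-1\}$, flip both $\sigma^{\Lambda_N,+}$ and $\tilde\sigma^{\Lambda_N,-}$, and add the two resulting strict inequalities---your reformulation makes the final sign check on $\sigma^{\Lambda_N,+}_w-\tilde\sigma^{\Lambda_N,-}_w$ immediate without any case analysis (indeed, $w\notin\mathcal{C}_*^{\Lambda_N}$ already says exactly that $\sigma^{\Lambda_N,+}_w=-1$ or $\tilde\sigma^{\Lambda_N,-}_w=+1$).
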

\begin{proof}
The proof is similar to that of Lemma~\ref{lem-perturbation}, and in a way it is the case of $K = \infty$ there.

Suppose that the claim is not true. Then take $v\in \mathcal C_*^{\Lambda_N}$ (for which the claim fails), and let $S$ be the connected component in $\mathcal C_*^{\Lambda_N}$ that contains $v$ (thus $S$ is not neighboring $\partial \Lambda_N$). Define $g(S, \tau)$ and $\tilde g(S, \tau)$ as in \eqref{eq-definition-g}. Similar to \eqref{eq-S-condition}, we have that
$$h_S + |g(S, \plus)| - |g(S, \minus)| + |g(S, \zero)| \geq 0\,.$$
In our case, $g(S, \zero) \cup g(S, \plus) \subset  \tilde g(S, \plus)$ and $\tilde g(S, \zero) \cup \tilde g(S, \minus) \subset g(S, \minus)$. Therefore,
$$\tilde h^{(N)}_S + |\tilde g(S, \plus)| - |\tilde g(S, \minus)| - |\tilde g(S, \zero)| \geq  h_S + | g(S, \plus)| - | g(S, \minus)| + |g(S, \zero)| \geq 0\,.$$
The preceding inequality implies that $\tilde H^-(\sigma') \leq \tilde H^\minus(\tilde \sigma^{\minus, \Lambda_N})$ where $\sigma'$ is obtained from $\tilde \sigma^{\minus, \Lambda_N}$ by flipping its value on $S$. This happens with probability 0 since the ground state is unique with probability 1.
\end{proof}

\section{Proof of Proposition~\ref{prop-crossing-dimension}}\label{sec-crossing-dimension}

In this section, we will set $K = K(N)= N/4$, and $\Delta = \Delta(N)= \gamma/N$ for an absolute constant $\gamma>0$ to be selected, and we consider $\tilde h^{(N)}$ as in \eqref{eq-def-tilde-h}. 
In this case Condition (a) in Lemma~\ref{lem-perturbation} holds trivially. For convenience, we use $ \P_N$ to denote the probability measure with respect to the field $\{ h_v: v\in \Lambda_N\}$ and use $\tilde \P_N$ to denote the probability measure with respect to $\{\tilde h^{(N)}_v: v\in \Lambda_N\}$.
\begin{lemma}\label{lem-perturbation-contiguous}
Recall that $\epsilon$ is the variance parameter for the field $\{h_v\}$. For any $p>0$, there exists $c = c(\epsilon,p, \gamma)>0$ such that for any event $E_N$ with $\tilde \P_N(E_N)\geq p$, we have that
$$\P_N(E_N) \geq c\,.$$
\end{lemma}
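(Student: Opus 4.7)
The plan is to recognize the statement as a Cameron--Martin style contiguity bound between two product Gaussian measures: $\P_N$ is the law under which the coordinates are iid $N(0,\epsilon^2)$, while $\tilde\P_N$ is the law under which they are iid $N(\Delta,\epsilon^2)$ with $\Delta=\gamma/N$. The scaling here is critical: since $|\Lambda_N|\asymp N^2$, the total squared shift $|\Lambda_N|\Delta^2 = O(\gamma^2)$ is of constant order, which is exactly the borderline regime in which the two measures remain mutually contiguous uniformly in $N$.

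Concretely, I would first write the Radon--Nikodym derivative explicitly from the ratio of product Gaussian densities,
\begin{equation*}
L \df \frac{d\P_N}{d\tilde\P_N}(h) = \exp\!\left(-\frac{\Delta}{\epsilon^2}\, h_{\Lambda_N} + \frac{|\Lambda_N|\,\Delta^2}{2\epsilon^2}\right),
\end{equation*}
so that $\P_N(E_N) = \tilde\E_N[L\, \one_{E_N}]$. The deterministic term is a positive $O(\gamma^2/\epsilon^2)$ constant that only helps the lower bound, so everything reduces to controlling the linear term. Under $\tilde\P_N$, $h_{\Lambda_N}$ is Gaussian with mean $|\Lambda_N|\Delta = O(\gamma N)$ and standard deviation $\epsilon\sqrt{|\Lambda_N|} = O(\epsilon N)$, so a standard Gaussian tail bound lets me choose $T=T(p,\gamma,\epsilon)$ such that the event $A \df \{h_{\Lambda_N}\le TN\}$ satisfies $\tilde\P_N(A)\ge 1-p/2$. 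On $A$, $L$ is deterministically bounded below by some $c_1=c_1(p,\gamma,\epsilon)>0$. Since $\tilde\P_N(E_N\cap A)\ge p/2$, restricting the change-of-measure identity to $E_N\cap A$ yields $\P_N(E_N)\ge c_1\cdot p/2$, giving the desired $c$.

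I do not anticipate any serious obstacle; the argument is essentially a textbook Gaussian change-of-measure. The conceptual content is entirely in the scale matching: the perturbation amount $\Delta=\gamma/N$ was chosen in the section's setup precisely so that $|\Lambda_N|\Delta^2$ stays bounded, placing the shift exactly at the boundary of contiguity. A perturbation significantly larger than $1/N$ would make $L$ exponentially small in $N$ with high $\tilde\P_N$-probability, and this direct change-of-measure strategy would no longer suffice---which is why the later application of this lemma to prove Proposition~\ref{prop-crossing-dimension} uses the specific choice $\Delta=\gamma/N$ rather than a larger perturbation.
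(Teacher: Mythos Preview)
Your proposal is correct and follows essentially the same approach as the paper: both write the explicit Gaussian Radon--Nikodym derivative $d\P_N/d\tilde\P_N$, restrict to a high-$\tilde\P_N$-probability event on which the sum $h_{\Lambda_N}$ (equivalently $\tilde h^{(N)}_{\Lambda_N}$) is controlled at scale $O(N)$, and use that on this event the derivative is bounded below by a constant depending only on $(\epsilon,p,\gamma)$. The only cosmetic differences are that the paper truncates with the two-sided event $\{|\tilde h^{(N)}_{\Lambda_N}-\Delta|\Lambda_N||\le C\epsilon N\}$ while you use a one-sided event, and the paper groups the constants so that the factor $\exp(-\Delta^2|\Lambda_N|/(2\epsilon^2))$ appears with a negative sign rather than positive---neither affects the argument.
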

\begin{proof}
There exists a constant $C>0$ such that $\tilde \P_N(|\tilde h^{(N)}_{\Lambda_{N}} - \Delta |\Lambda_N|| \geq C \epsilon N) \leq p/2$. Thus we have 
\begin{equation}\label{eq-prob-E-N-bound}
\tilde \P_N(E_N; |\tilde h^{(N)}_{\Lambda_{N}} - \Delta |\Lambda_N|| \leq C \epsilon N) \geq p/2\,.
\end{equation}
Also, by a straightforward Gaussian computation, we see that
\begin{equation}\label{eq-change-of-measure}
\frac{d  \P_N}{d \tilde \P_N}  = \exp\big\{ - \frac{\Delta(\tilde h^{(N)}_{\Lambda_{N}} - \Delta |\Lambda_N|)}{\epsilon^2} \big\} \exp\big\{\frac{-\Delta^2 |\Lambda_N|}{2\epsilon^2}\big\}
\end{equation}
and thus there exists $\iota = \iota (\epsilon)>0$ such that 
$$\frac{d  \P_N}{d \tilde \P_N} \geq \iota \mbox{ provided that } |\tilde h^{(N)}_{\Lambda_{N}} - \Delta |\Lambda_N|| \leq C \epsilon N\,.$$
Combined with \eqref{eq-prob-E-N-bound}, this completes the proof of the lemma.
\end{proof}
For any annulus $\mathcal A$, we denote by $\mathrm{Cross}_{\mathrm{hard}}(\mathcal A, \mathcal C)$ the event that there is a contour in $\mathcal C$ which separates the inner and outer boundaries of $\mathcal A$, and by $\mathrm{Cross}_{\mathrm{easy}}(\mathcal A, \mathcal C)$ the event that there is a path in $\mathcal C$ which connects the inner and outer boundaries of $\mathcal A$.
\begin{lemma}\label{lem-bound-hard-crossing}
There exists $\delta = \delta(\epsilon)>0$ such that  
$$
\min\{\P(\mathrm{Cross}_{\mathrm{hard}}(\Lambda_{N/8}\setminus \Lambda_{N/32}, \mathcal C^{\Lambda_N})), \P(\mathrm{Cross}_{\mathrm{easy}}(\Lambda_{N/8}\setminus \Lambda_{N/32}, \mathcal C^{\Lambda_N})) \}\leq 1- \delta \mbox{ for all } N \geq 32.$$
\end{lemma}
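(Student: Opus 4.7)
The plan is by contradiction: assume both $\P(H) > 1-\delta$ and $\P(E) > 1-\delta$ for small $\delta$, where $H$ and $E$ abbreviate the hard (contour) and easy (crossing-path) events in $\Lambda_{N/8}\setminus\Lambda_{N/32}$, and derive a contradiction by combining the three lemmas of Section~\ref{sec-perturbation}. I use the parameters fixed at the start of Section~\ref{sec-crossing-dimension}: $K=N/4$, $\Delta=\gamma/N$, with $\gamma$ a small absolute constant to be selected.

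I first apply Lemma~\ref{lem-perturbation} with these parameters. Since the $\ell_1$-distance between $\partial\Lambda_{N/4}$ and $\partial\Lambda_{N/2}$ is at least $N/4 = K$, condition (a) holds trivially for every field realization, so condition (b) must fail deterministically, yielding
\[
|\mathcal C_*^{\Lambda_N}\cap\Lambda_{N/4}| \;\le\; \tfrac{32}{\gamma}\,|\mathcal C_*^{\Lambda_N}\cap\mathcal A_{N/2}|.
\]
Next I invoke contiguity in both directions. Lemma~\ref{lem-perturbation-contiguous} together with its reverse (which follows from the symmetry of \eqref{eq-change-of-measure} under $h\leftrightarrow\tilde h$) implies that any event with probability $\ge 1-\delta$ under one of the two measures has probability $\ge c(\delta,\gamma,\epsilon) > 0$ under the other. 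Combined with the identity that an event involving $\tilde{\mathcal C}^{\Lambda_N}$ under $\P_N$ equals the same event involving $\mathcal C^{\Lambda_N}$ under $\tilde\P_N$, this gives a positive constant lower bound (uniform in $N$) for the $\P_N$-probability that the four events $H,E$ in $\mathcal C^{\Lambda_N}$ and $H,E$ in $\tilde{\mathcal C}^{\Lambda_N}$ all hold simultaneously.

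On this joint event, a contour from $H$ in one of the two systems and a crossing path from $E$ in the other must intersect topologically at a site of $\mathcal C^{\Lambda_N}\cap\tilde{\mathcal C}^{\Lambda_N} = \mathcal C_*^{\Lambda_N}\cap\Lambda_{N/4}$. My plan is to amplify this single intersection into a quantitative inequality that contradicts the deterministic bound above: I will run the contour–path intersection argument across many disjoint thin sub-rectangles of $\Lambda_{N/8}\setminus\Lambda_{N/32}$—in the same rectangle geometry foreshadowed by Lemma~\ref{lem-assumption}—so that the high probability of $E$ propagates (by a Russo--Seymour--Welsh-style concatenation) to $\Omega(N)$ disjoint sub-crossings in $\tilde{\mathcal C}^{\Lambda_N}$, each contributing a distinct site of $\mathcal C_*^{\Lambda_N}\cap\Lambda_{N/4}$. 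Meanwhile I use Lemma~\ref{lem-percolation-property} applied to $\mathcal C_*^{\Lambda_N}$ to control $|\mathcal C_*^{\Lambda_N}\cap\mathcal A_{N/2}|$ from above: every site there must be connected through $\mathcal C_*^{\Lambda_N}$ all the way out to $\partial\Lambda_N$, which constrains how these sites can cluster. Taking $\gamma$ small enough should then make the factor $32/\gamma$ too tight to accommodate both estimates, producing the contradiction.

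The main obstacle is precisely this amplification step: a bare topological intersection yields only $O(1)$ sites of $\mathcal C_*^{\Lambda_N}$, whereas contradicting the deterministic bound requires a linear-in-$N$ (or larger) lower bound on $|\mathcal C_*^{\Lambda_N}\cap\Lambda_{N/4}|$, paired with a matching upper bound on $|\mathcal C_*^{\Lambda_N}\cap\mathcal A_{N/2}|$—so the bulk of the work is setting up and exploiting many disjoint sub-crossings in $\tilde{\mathcal C}^{\Lambda_N}$ without losing the positive lower bound on the joint probability. In addition the quantifier order $\epsilon \to \gamma \to \delta$ must be respected throughout, since each application of Lemma~\ref{lem-perturbation-contiguous} and each union bound consumes a small constant.
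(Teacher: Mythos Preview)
Your plan has a genuine gap at precisely the place you flag as ``the main obstacle,'' and the surrounding logic does not close it. First, the direction of $\gamma$ is backwards: the deterministic consequence of Lemma~\ref{lem-perturbation} is $|\mathcal C_*^{\Lambda_N}\cap\Lambda_{N/4}|\le (32/\gamma)\,|\mathcal C_*^{\Lambda_N}\cap\mathcal A_{N/2}|$, so taking $\gamma$ \emph{small} makes $32/\gamma$ large and the inequality \emph{easier} to satisfy, not tighter. To get a contradiction you would need $\gamma$ large---indeed the paper takes $\gamma=100r$. Second, Lemma~\ref{lem-percolation-property} only says each site of $\mathcal C_*^{\Lambda_N}$ is connected to $\partial\Lambda_N$; it gives no upper bound on $|\mathcal C_*^{\Lambda_N}\cap\mathcal A_{N/2}|$, which could just as well be of order $N$ or $N^2$. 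Third, your ``RSW-style concatenation'' to manufacture $\Omega(N)$ intersection sites is circular: RSW inputs are exactly the kind of uniform crossing bounds you are trying to establish, and even granting them you only produce sites of $\mathcal C^{\Lambda_N}\cap\tilde{\mathcal C}^{\Lambda_N}$ in $\Lambda_{N/8}$, with no matching control in the annulus.

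What the paper does instead avoids all of this by never estimating either cardinality directly. It covers $\mathcal A_{N/2}$ by $r$ translated copies $A_i$ of $\Lambda_{N/32}$, sets $X_i=|\mathcal C_\diamond^{A_i^{\mathrm{Big}}}\cap A_i|$ and $X=|\mathcal C_\diamond^{\Lambda_{N/8}}\cap\Lambda_{N/32}|$, and observes that $X$ and the $X_i$ are \emph{identically distributed} and $X$ is independent of the $X_i$. Hence $\P(X\ge\max_i X_i,\,X\ge\theta)\ge 1/4r$ for a suitable quantile $\theta$. On this event (and $\theta>0$) one gets $|\mathcal C_\diamond^{\Lambda_{N/8}}\cap\Lambda_{N/32}|\ge r^{-1}|\mathcal C_*^{\Lambda_N}\cap\mathcal A_{N/2}|$ by monotonicity, which with $\gamma=100r$ \emph{forces} condition~(b); Lemma~\ref{lem-perturbation} then produces a vertex $v\in\mathcal C_\diamond^{\Lambda_{N/8}}\cap\Lambda_{N/32}$ with $v\notin\mathcal C_*^{\Lambda_N}$, and a short topological argument shows this blocks the hard crossing in either $\mathcal C^{\Lambda_N}$ or $\tilde{\mathcal C}^{\Lambda_N}$. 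The missing idea in your proposal is exactly this identical-distribution comparison between the central box and the boxes tiling the annulus; without it there is no mechanism to relate the two cardinalities.
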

\begin{proof}
We can write $\mathcal A_{N/2} = \cup_{i=1}^r A_i$ where each $A_i$ is a  box of side length $N/16$ (so a copy of $\Lambda_{N/32}$) and $r\geq 16$ is a fixed integer. For a box $A$, denoting by $A^{\mathrm{Big}}$ as the concentric box of $A$ whose side length is $4 \ell_A$. We have that 
\begin{equation}\label{eq-disjointness}
A_i^{\mathrm{Big}} \cap \Lambda_{N/8} = \emptyset  \mbox{ and } A_i^{\mathrm{Big}}  \subset \Lambda_N \mbox{ for all } 1\leq i\leq r.
\end{equation}
For any $A\subset \Lambda_N$, let $\bar {\mathcal C}^A$ be defined as $\mathcal C^A$ but replacing $\{h_v: v\in A\}$ by $\{\tilde h^{(N)}_v: v\in A\}$ (note that $\bar {\mathcal C}^{\Lambda_{N/2}}$ is different from $\tilde {\mathcal C}^{\Lambda_{N/2}}$, which is defined with respect to $\tilde h^{(N/2)}$). Write $\mathcal C_\diamond^{A} = \mathcal C^A \cap \bar {\mathcal C}^A$.
Write $X_i = |\mathcal C_\diamond^{A_i^{\mathrm{Big}}} \cap A_i|$ and $X = |\mathcal C_\diamond^{\Lambda_{N/8}} \cap \Lambda_{N/32}|$. Clearly $X_i$'s and $X$ are identically distributed and by \eqref{eq-disjointness} $X_i$'s are independent of $X$ (but $X_i$'s are not mutually independent). Let $\theta = \inf\{x: \P(X \leq x) \geq 1 - 1/2r \}$. Thus, 
\begin{equation}\label{eq-prob-E}
\mathbb P(X \geq \max_{1\leq i\leq r} X_i, X\geq \theta) \geq \P(X\geq \theta) \P(\max_{1\leq i\leq r} X_i \leq \theta) \geq 1/4r\,.
\end{equation}
The rest of the proof divides into two cases.

\noindent {\bf Case 1:} $\theta>0$.
Let $\mathcal E = \{  |\mathcal C_\diamond^{\Lambda_{N/8}} \cap \Lambda_{N/32}| \geq r^{-1} | \mathcal C_*^{\Lambda_N} \cap \mathcal A_{N/2}|\} \cap\{ |\mathcal C_\diamond^{\Lambda_{N/8}} \cap \Lambda_{N/32}|  >0\}$. By \eqref{eq-monotonicity} and \eqref{eq-disjointness}, we have $|\mathcal C_*^{\Lambda_N} \cap \mathcal A_{N/2}| \leq \sum_{i=1}^r X_i$. Combined with \eqref{eq-prob-E}, it gives that $\P(\mathcal E) \geq 1/4r$. Setting $\gamma = 100 r$, we get that $|\mathcal C_\diamond^{\Lambda_{N/8}} \cap \Lambda_{N/32}| \cdot \Delta> 16 K^{-1}  | \mathcal C_*^{\Lambda_N} \cap \mathcal A_{N/2}|$ on $\mathcal E$. By Lemma~\ref{lem-perturbation}, on $\mathcal E$ there is at least one vertex $v\in \mathcal C_\diamond^{\Lambda_{N/8}} \cap \Lambda_{N/32}$ but $v\not\in \mathcal C_*^{\Lambda_N}$. So either $v \not\in \mathcal C^{\Lambda_N} $ or $v \not\in \tilde {\mathcal C}^{\Lambda_N} $ on $\mathcal E$. Assume that $v \not\in \mathcal C^{\Lambda_N}$ and the other case can be treated similarly. 

We will use the following property: for any connected set $\mathcal A$, $u\not\in \mathcal C^{\mathcal A}$ if and only if there exists a connected set $A\subset \mathcal A$ with $u\in A$ such that $\xi^A_w= \plus$ for all $w\in A$ or $\xi^A_w= \minus$ for all $w\in A$. The ``if'' direction of the property follows from \eqref{eq-monotonicity}. For the ``only if'' direction, we assume without loss that $\xi_u^{\mathcal A} = \plus$ and let $A$ be the connected component containing $u$ where the $\xi^{\mathcal A}$-value is $\plus$. Note $\sigma^{\mathcal A, -}_w = -1$ for all $w\in \partial A$ and $\sigma^{\mathcal A, -}_w = 1$ for all $w\in A$. This implies that $\xi^A_w = \plus$ for all $w\in A$.

By the preceding property, there exists a connected set $A\subset \Lambda_N$ with $v\in A$ such that $\xi^{A}_w = \plus$ for all $w\in A$ or $\xi^{A}_w = \minus$ for all $w\in A$. In addition, $A$ cannot be contained in $\Lambda_{N/8}$ since otherwise it contradicts $v\in \mathcal C^{\Lambda_{N/8}}$. By planar duality, this implies that on $\mathcal E$, either $\mathrm{Cross}_{\mathrm{hard}}(\Lambda_{N/8}\setminus \Lambda_{N/32}, \mathcal C^{\Lambda_N})$ or $\mathrm{Cross}_{\mathrm{hard}}(\Lambda_{N/8}\setminus \Lambda_{N/32}, \tilde {\mathcal C}^{\Lambda_N})$ does not occur (the second case corresponds to the case when $v\not\in \tilde {\mathcal C}^{\Lambda_N}$). Therefore, 
$$\P((\mathrm{Cross}_{\mathrm{hard}}(\Lambda_{N/8}\setminus \Lambda_{N/32}, \mathcal C^{\Lambda_N}))^c) + \P((\mathrm{Cross}_{\mathrm{hard}}(\Lambda_{N/8}\setminus \Lambda_{N/32}, \tilde {\mathcal C}^{\Lambda_N}))^c) \geq \P(\mathcal E) \geq 1/4r\,.$$
Combined with Lemma~\ref{lem-perturbation-contiguous}, this completes the proof of the lemma.

\noindent {\bf Case 2:}  $\theta = 0$. Applying a simple union bound (by using 16 copies of $\Lambda_{N/32}$ to cover $\Lambda_{N/8}$, and a similar derivation to $|\mathcal C_*^{\Lambda_N} \cap \mathcal A_{N/2}| \leq \sum_{i=1}^r X_i$) we get that $\P(\mathcal C_*^{\Lambda_N} \cap  \Lambda_{N/8} = \emptyset) \geq 1/2$. We assume without loss that $ \P(\mathrm{Cross}_{\mathrm{easy}}(\Lambda_{N/8}\setminus \Lambda_{N/32}, \mathcal C^{\Lambda_N})) \geq 3/4$ (otherwise there is nothing further to prove), and thus 
$$ \P(\mathrm{Cross}_{\mathrm{easy}}(\Lambda_{N/8}\setminus \Lambda_{N/32}, \mathcal C^{\Lambda_N})\mbox{ and }\mathcal C_*^{\Lambda_N} \cap  \Lambda_{N/8} = \emptyset ) \geq 1/4\,.$$
On the event $\mathrm{Cross}_{\mathrm{easy}}(\Lambda_{N/8}\setminus \Lambda_{N/32}, \mathcal C^{\Lambda_N})\mbox{ and }\mathcal C_*^{\Lambda_N} \cap  \Lambda_{N/8} = \emptyset$, the easy crossing (joining two boundaries of $\Lambda_{N/8}\setminus \Lambda_{N/32}$) in $\mathcal C^{\Lambda_N}$ becomes an easy crossing with $\tilde \xi^{\Lambda_N}$-values $\plus$, and thus by planar duality prevents existence of a contour surrounding $\Lambda_{N/32}$ in $(\Lambda_{N/8}\setminus \Lambda_{N/32}) \cap \tilde {\mathcal C}^{\Lambda_N}$. Therefore, 
$$\P((\mathrm{Cross}_{\mathrm{hard}}(\Lambda_{N/8}\setminus \Lambda_{N/32}, \tilde {\mathcal C}^{\Lambda_N}))^c) \geq 1/4\,.$$
Combined with Lemma~\ref{lem-perturbation-contiguous}, this completes the proof of the lemma.
\end{proof}

\begin{proof}[Proof of \eqref{eq-crossing-prob}]
Let $N = \min\{2^n: 2^{n+2} \geq \ell_A\}$. By our assumption on $A$, it is clear that we can position four copies $A_1, A_2, A_3, A_4$ of $A$ by translation or rotation by 90 degrees so that (see the left of Figure~\ref{figure})
\begin{itemize}
\item $A_1, A_2, A_3, A_4 \subset \Lambda_{N/8} \setminus \Lambda_{N/32}$.
\item  The union of any crossings through $A_1, A_2, A_3, A_4$ in their longer directions surrounds $\Lambda_{N/32}$.
\item $\Lambda_N \subset A^{\mathrm{Large}}_i$ for $1\leq i\leq 4$.
 \end{itemize}
Set $p = \P(\mathrm{Cross}(A, \mathcal C^{A^{\mathrm{Large}}}))$ (note that $p$ depends on the dimension of $A$ and also the orientation of $A$). By rotation symmetry and \eqref{eq-monotonicity} we see that $\P(\mathrm{Cross}(A_i, \mathcal C^{\Lambda_N}) ) \geq \P(\mathrm{Cross}(A_i, \mathcal C^{A_i^{\mathrm{Large}}}) ) =   p$.  In what follows, we denote $\mathcal A =  \Lambda_{N/8} \setminus \Lambda_{N/32}$. Then, by  $\P(\mathrm{Cross}(A_i, \mathcal C^{\Lambda_N}) ) \geq p$ and a simple union bound, we get that
 \begin{equation}\label{eq-cross-A-hard}
 \P(\mathrm{Cross}_{\mathrm{hard}}(\mathcal A, \mathcal C^{\Lambda_N})) \geq \P (\cap_{i=1}^4 \mathrm{Cross}(A_i, \mathcal C^{\Lambda_N}))\geq 1 - 4(1-p)\,.
 \end{equation}
 Similarly, we can arrange two copies  $A_a, A_b$ of $A$ obtained by translation and rotation by 90 degrees such that $\Lambda_N \subset A_a^{\mathrm{Large}}, A_b^{\mathrm{Large}}$ and that the union of any two crossings through $A_a^{\mathrm{Large}}, A_b^{\mathrm{Large}}$ in the longer direction connects the two boundaries of $\mathcal A$ (see the right of Figure~\ref{figure}). This implies that
 \begin{equation}\label{eq-cross-A-easy}
 \P(\mathrm{Cross}_{\mathrm{easy}}(\mathcal A, \mathcal C^{\Lambda_N})) \geq  \P (\mathrm{Cross}(A_a, \mathcal C^{\Lambda_N}) \cap \mathrm{Cross}(A_b, \mathcal C^{\Lambda_N}))\geq 1 - 2(1-p)\,.
 \end{equation}
 Combined with \eqref{eq-cross-A-hard} and Lemma~\ref{lem-bound-hard-crossing}, it yields that $p\leq 1 - \delta$ for some $\delta = \delta(\epsilon)>0$ as required.
\end{proof}
 
The following standard lemma will be applied several  times below. Divide $\Lambda_N$ into disjoint boxes of side lengths $N' \leq N$ where $N' = 2^{n'}$ for some $n'\geq 1$, and denote by $\mathcal B(N, N')$ the collection of such boxes. Consider a percolation process on $\mathcal B(N, N')$, where each box $B\in \mathcal B(N, N')$ is regarded open or closed randomly. For $C, p>0$, we say that the percolation process satisfies the $(N, N', C, p)$-condition if for each $B\in \mathcal B(N, N')$, there exists an event $E_B$ such that 
\begin{itemize}
\item On $E_B^c$, $B$ is closed.
\item $\P(E_B) \leq p$ for each $B$.
\item If $\min_{x\in B_i, y\in B_j}|x-y|_\infty \geq CN'$ for all $1\leq i<j\leq k$, then the events $E_{B_1}, \ldots, E_{B_k}$ are  mutually independent.
\end{itemize}
Furthermore, we say two boxes $B_1, B_2$ are adjacent if $\min_{x_1\in B_1, x_2\in B_2} |x_1 - x_2|_\infty  \leq 1$, and we say a collection of boxes is a lattice animal if these boxes form a connected graph.
\begin{lemma}\label{lem-enhance}
For any $C>0$, there exists $p>0$ such that for all $N$ and $N'\leq N$ and any percolation process on $\mathcal B(N, N', C, p)$ satisfying the $(N,N', C, p)$-condition, we have
$$\P(\mbox{there exists a lattice animal of open boxes on } \mathcal B(N, N') \mbox{ of size  at least } k) \leq (\tfrac{N}{N'})^2 2^{-k}\,.$$
\end{lemma}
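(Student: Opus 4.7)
The plan is a standard Peierls-type argument combining a lattice-animal count with the finite-range independence supplied by the $(N, N', C, p)$-condition.

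First I would reduce to animals of size exactly $k$: any open lattice animal of size $\geq k$ contains a connected sub-animal of size exactly $k$ (prune a spanning tree of it down to $k$ vertices; all vertices are still open), so bounding the probability that such an animal of size exactly $k$ exists suffices.

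Next I would invoke the classical bound that the number of lattice animals of size $k$ in $\mathbb{Z}^2$ under the king-graph adjacency used here, containing a prescribed box, is at most $c_1^k$ for some absolute constant $c_1$ (spanning-tree/contour encoding: each animal admits a depth-first traversal encoded by a word of length $\leq 2k$ over a bounded alphabet). Multiplying by the $|\mathcal B(N, N')| = (N/N')^2$ choices of starting box produces at most $(N/N')^2 c_1^k$ candidate animals to union-bound over.

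The main step is then to exploit the third bullet of the hypothesis by greedy thinning. Given a lattice animal $\mathcal A$ of size $k$, repeatedly pick any surviving box $B \in \mathcal A$ and remove every box of $\mathcal A$ within $\ell_\infty$-distance $CN'$ of $B$. Since at most $M = M(C) \leq (2C+3)^2$ boxes of $\mathcal B(N, N')$ lie within $\ell_\infty$-distance $CN'$ of any given box, this produces a subcollection $\mathcal A' \subset \mathcal A$ with $|\mathcal A'| \geq k/M$ whose members are pairwise at $\ell_\infty$-distance $\geq CN'$. For $\mathcal A$ to be open, every $B \in \mathcal A$ must satisfy $E_B$, so in particular $\{E_B\}_{B \in \mathcal A'}$ all occur; by the hypothesis these events are mutually independent, each of probability $\leq p$, giving
\[
\P(\mathcal A \text{ is open}) \leq p^{k/M}.
\]
A union bound over candidate animals then yields
\[
\P(\exists \text{ open lattice animal of size } \geq k) \leq (N/N')^2 \bigl(c_1 p^{1/M}\bigr)^{k},
\]
and choosing $p = p(C) > 0$ small enough that $c_1 p^{1/M} \leq 1/2$ gives the claim. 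The only mildly delicate point is the greedy thinning, where one must verify that $|\mathcal A'| \geq k/M$ beats the entropic factor $c_1^k$; both this and the animal count are classical, so I do not anticipate a substantive obstacle.
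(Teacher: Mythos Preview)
Your proposal is correct and follows essentially the same approach as the paper: count lattice animals of size exactly $k$ by an exponential bound times the $(N/N')^2$ choices of root, extract a well-separated subcollection of size $\geq ck$ (your greedy thinning with $c=1/M$), use the assumed independence to bound the probability by $p^{ck}$, and finish with a union bound and a choice of $p$ making $c_1 p^{c}\leq 1/2$. The paper's argument is just a terser version of yours (it skips the spanning-tree pruning and takes the animal bound as $8^{2k}$ via a surrounding-contour encoding).
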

\begin{proof}
 On the one hand, the number of lattice animals of size exactly $k$ is bounded by  $(\tfrac{N}{N'})^2 8^{2k}$ (the bound comes from first choosing a starting box, and then encoding the lattice animal by a  surrounding contour on $ \mathcal B(N, N')$ of length $2k$). On the other hand, for any $k$ such boxes, we can extract a sub-collection of $ck$ boxes (here $c>0$ is a constant that depends only on $C$) such that the pairwise distances of boxes in this sub-collection are at least $CN'$; hence the probability that all these $k$ boxes are open is at most $p^{ck}$. The proof of the lemma is then completed by a simple union bound, employing the $(N, N', C, p)$-condition.
\end{proof}

 \begin{figure}[h] 
  \includegraphics[width=15cm]{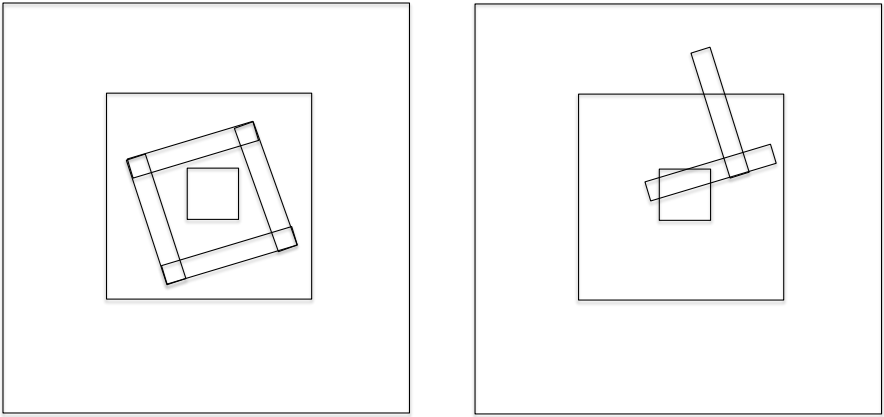}
\\ \vspace{-5.5cm} \caption{On both left and right, the three concentric square boxes are $\Lambda_N$, $\Lambda_{N/8}$ and $\Lambda_{N/32}$ respectively. On the left, the four rectangles are $A_1, A_2, A_3, A_4$ and on the right the two rectangles are $A_a, A_b$.}\label{figure}
\end{figure}

\begin{proof}[Proof of Proposition~\ref{prop-crossing-dimension}]
Let $N'= N^{1 - (\frac{\alpha - 1}{10} \wedge \frac{1}{10})}$, where $\alpha$ is as in \eqref{eq-box-counting-weak}. For each $B\in \mathcal B(N, N')$, we say $B$ is open if $d_{\mathcal C^{B^{\mathrm{Large}}}} (\partial B, \partial B^{\mathrm{large}}) \leq (N')^{\alpha}$, where $B^{\mathrm{large}}$ is the box  concentric with $B$ of doubled side length and $B^{\mathrm{Large}}$ (as we recall) is a concentric box of $B$ with side length $8\ell_B$. By \eqref{eq-box-counting-weak}, we see that  this percolation process satisfies the $(N, N', 16,p)$-condition where $p \to 0$ as $N\to \infty$. Now,  in order that $ d_{\mathcal C^{\Lambda_N}} (\partial \Lambda_{N/4}, \partial \Lambda_{N/2}) \leq (N')^{\alpha}$, there must exist an open lattice animal on  $\mathcal B(N, N')$ of size at least $\frac{N}{16N'}$. Applying Lemma~\ref{lem-enhance} completes the proof of Proposition~\ref{prop-crossing-dimension} (since $(\alpha(1 - (\frac{\alpha - 1}{10} \wedge \frac{1}{10}))>1$).
\end{proof}

\section{Proof of Theorem~\ref{thm-main}}\label{sec-main-thm}

Let $\alpha>1$ be as in Proposition~\ref{prop-crossing-dimension}. Let $\sqrt{1/\alpha} < \alpha' < 1$. 
\begin{lemma}\label{lem-m-star}
For $N^\star \geq 16$, set $K = (N^\star)^{\alpha \alpha'}$ and $\Delta = (N^\star)^{-\alpha(\alpha')^2}$, and let $\tilde h^{(N)}$ be defined as in \eqref{eq-def-tilde-h}. Write $m^\star_N = m^\star_N(N^\star) =  \P(o \in \mathcal C_*^{\Lambda_N})$. Then there exists $C= C(\epsilon)>0$ such that  $m^\star_{N^\star} \leq C (N^\star)^{-6}$.
\end{lemma}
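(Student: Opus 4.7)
The plan is to set up a contracting recursion for $m^\star_M(N^\star)\df \P(o\in \mathcal C_*^{\Lambda_M})$ (at the same perturbation parameter $\Delta=(N^\star)^{-\alpha(\alpha')^2}$ prescribed by the statement) and iterate it a constant number of times. The ingredients are Proposition~\ref{prop-crossing-dimension} (high-probability lower bound on geodesic length in $\mathcal C$), Lemma~\ref{lem-perturbation} (resulting upper bound on $|\mathcal C_*\cap\Lambda_{M/4}|$), Lemma~\ref{lem-percolation-property} (matching lower bound from the path that must contain~$o$), Markov's inequality, and the monotonicity~\eqref{eq-monotonicity}.

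For any scale $M\ge (N^\star)^{\alpha'}$, let $G_M=\{d_{\mathcal C^{\Lambda_M}}(\partial\Lambda_{M/4},\partial\Lambda_{M/2})>M^\alpha\}$; Proposition~\ref{prop-crossing-dimension} gives $\P(G_M^c)\le \kappa^{-1}e^{-M^\kappa}$. Since $\mathcal C_*^{\Lambda_M}\subset\mathcal C^{\Lambda_M}$, the corresponding distance in $\mathcal C_*^{\Lambda_M}$ is at least as large, so condition (a) of Lemma~\ref{lem-perturbation} (with $K=(N^\star)^{\alpha\alpha'}\le M^\alpha$) holds on $G_M$, forcing (b) to fail: $|\mathcal C_*^{\Lambda_M}\cap\Lambda_{M/4}|\le \tfrac{8}{K\Delta}|\mathcal C_*^{\Lambda_M}\cap\mathcal A_{M/2}|$. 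On the further event $\{o\in \mathcal C_*^{\Lambda_M}\}$, Lemma~\ref{lem-percolation-property} produces a path in $\mathcal C_*^{\Lambda_M}$ from $o$ to $\partial\Lambda_M$ whose portion inside $\Lambda_{M/4}$ has at least $M/4$ vertices; combining yields $|\mathcal C_*^{\Lambda_M}\cap\mathcal A_{M/2}|\ge MK\Delta/32$ on $\{o\in\mathcal C_*^{\Lambda_M}\}\cap G_M$. For any $v\in\mathcal A_{M/2}$ the box $\Lambda_{M/2}(v)$ is contained in $\Lambda_M$, so by~\eqref{eq-monotonicity} applied to both $\mathcal C$ and $\tilde{\mathcal C}$ together with translation invariance of the field (the shift $\Delta$ is constant, so no asymmetry is introduced), $\P(v\in \mathcal C_*^{\Lambda_M})\le m^\star_{M/2}(N^\star)$, whence $\E|\mathcal C_*^{\Lambda_M}\cap\mathcal A_{M/2}|\le 3M^2\,m^\star_{M/2}(N^\star)$. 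Markov's inequality then yields the recursion
\[
m^\star_M(N^\star)\le \frac{96\,M}{K\Delta}\,m^\star_{M/2}(N^\star)+\kappa^{-1}e^{-M^\kappa} = \frac{96\,M}{(N^\star)^{\gamma}}\,m^\star_{M/2}(N^\star)+\kappa^{-1}e^{-M^\kappa},\qquad \gamma\df\alpha\alpha'(1-\alpha').
\]

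Iterating this inequality $k$ times from $M=N^\star$ down to $M=N^\star/2^k$ is valid as long as $N^\star/2^k\ge (N^\star)^{\alpha'}$, i.e., $k\le (1-\alpha')\log_2 N^\star$, which holds for any fixed $k$ once $N^\star$ is large. Using the trivial bound $m^\star_\cdot\le 1$ and absorbing the stretched-exponentially small error terms, the product of multiplicative factors telescopes to
\[
m^\star_{N^\star}\le \frac{96^k\,(N^\star)^{k(1-\gamma)}}{2^{k(k-1)/2}}+\text{(negligible)}\le C_k\,(N^\star)^{-k(\gamma-1)}
\]
provided $\gamma>1$. Choosing $k=\lceil 7/(\gamma-1)\rceil$, an absolute constant, yields $m^\star_{N^\star}\le C(N^\star)^{-6}$ for large $N^\star$; the constant $C$ is then inflated to absorb the finitely many bounded values of $N^\star$.

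The main obstacle is arranging $\gamma=\alpha\alpha'(1-\alpha')>1$ inside the prescribed range $\sqrt{1/\alpha}<\alpha'<1$: when $1/2$ lies in the range (which requires $\alpha>4$) one takes $\alpha'=1/2$ and obtains $\gamma=\alpha/4>1$, while in general the maximum of $\gamma$ on the range is attained near the left endpoint $\alpha'=\sqrt{1/\alpha}^+$, giving $\gamma\approx\sqrt{\alpha}-1$; this maximum is $>1$ whenever $\alpha>4$, which is the regime in which the strategy directly applies (in the borderline regime one would iterate with care, but the present lemma operates with $\alpha$ supplied by Proposition~\ref{prop-crossing-dimension}). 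Granted this choice, the rest is standard book-keeping: verifying that the good event $G_M$ is available at every scale $M\ge(N^\star)^{\alpha'}$ used in the iteration, and that the cumulative contribution of the $\kappa^{-1}e^{-M^\kappa}$ errors across $k$ iterations remains far below $(N^\star)^{-6}$.
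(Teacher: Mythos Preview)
Your recursion $m^\star_M \le \tfrac{96M}{K\Delta}\,m^\star_{M/2}+\kappa^{-1}e^{-M^\kappa}$ is correct, but it is not strong enough: as you yourself note, the contraction factor $96M/(N^\star)^\gamma$ with $\gamma=\alpha\alpha'(1-\alpha')$ is less than $1$ at $M=N^\star$ only when $\gamma>1$, and $\max_{\alpha'\in(1/\sqrt\alpha,1)}\alpha\alpha'(1-\alpha')\le\alpha/4$, so you need $\alpha>4$. Proposition~\ref{prop-crossing-dimension} supplies only \emph{some} $\alpha>1$ (coming from the Aizenman--Burchard machinery with an uncontrolled quantitative input), and nothing in the paper lets you upgrade it. The hedge ``in the borderline regime one would iterate with care'' does not close the gap: even iterating the maximal number $k\approx(1-\alpha')\log_2 N^\star$ of times and using the telescoping $2^{-k(k-1)/2}$, the product blows up unless roughly $\alpha>3$.

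The loss comes from how you lower-bound $|\mathcal C_*^{\Lambda_M}\cap\Lambda_{M/4}|$: using a single path through $o$ gives only $M/4$, whereas the paper exploits monotonicity in the \emph{other} direction to get $\E|\mathcal C_*^{\Lambda_N}\cap\Lambda_{N/4}|\ge \tfrac{N^2}{32}m^\star_{2N}$ (since $\Lambda_N\subset v+\Lambda_{2N}$ for $v\in\Lambda_{N/4}$). Comparing this with your upper bound $\E|\mathcal C_*^{\Lambda_N}\cap\mathcal A_{N/2}|\le N^2 m^\star_{N/2}$ cancels the $N^2$ and yields, via Lemma~\ref{lem-perturbation} and Proposition~\ref{prop-crossing-dimension} by contradiction, the recursion $m^\star_{2N}\le K^{-(1-\alpha')/2}m^\star_{N/2}$ for all $(N^\star)^{\alpha'}\le N\le N^\star$. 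The point is that the contraction factor $K^{-(1-\alpha')/2}=(N^\star)^{-\gamma/2}$ is \emph{always} less than $1$, and since the recursion can be applied $\Theta(\log N^\star)$ times in the allowed window, the output is super-polynomially small. Replace your path-length lower bound by this two-sided expectation comparison and the argument goes through for every $\alpha>1$.
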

\begin{remark}
In this lemma, regardless of the size of the box under consideration, the amount of perturbation $\Delta$ in our field $\tilde h^{(N)}$ only depends on $N^\star$. This is crucial for  \eqref{eq-monotonicity-consequence} below.
\end{remark}
\begin{proof}
It suffices to show that by recursion, there exists $N_0 = N_0(\epsilon)$ such that  for  $N^\star \geq N_0$
\begin{equation}\label{eq-m-recursion-star}
m^\star_{2N} \leq K^{-\frac{1-\alpha'}{2}} m^\star_{N/2} \mbox{ for }  (N^\star)^{\alpha'} \leq  N \leq N^\star\,.
\end{equation}
Suppose that \eqref{eq-m-recursion-star} fails for some  $(N^\star)^{\alpha'} \leq  N \leq N^\star$. Since $\Lambda_N \subset v + \Lambda_{2N}$ for all $v\in \Lambda_{N/4}$ and $v + \Lambda_{N/2} \subset \Lambda_N$ for all $v\in \mathcal A_{N/2}$,  by \eqref{eq-monotonicity} we see 
\begin{equation}\label{eq-monotonicity-consequence}
\E|\mathcal C_*^{\Lambda_N} \cap \Lambda_{N/4}|  \geq \frac{N^2}{32} m^\star_{2N} \mbox{ and } \E |\mathcal C_*^{\Lambda_N} \cap \mathcal A_{N/2}| \leq N^2 m^\star_{N/2}\,.
\end{equation}
Together with the assumption that \eqref{eq-m-recursion-star} fails, this yields that 
$$ \E|\mathcal C_*^{\Lambda_N} \cap \Lambda_{N/4}|  > 32^{-1} K^{-\frac{1-\alpha'}{2}}\E |\mathcal C_*^{\Lambda_N} \cap \mathcal A_{N/2}|\,.$$
Since $|\mathcal C_*^{\Lambda_N} \cap \Lambda_{N/4}| $ and $|\mathcal C_*^{\Lambda_N} \cap \mathcal A_{N/2}|$ are integer-valued and are at most $N^2$, the preceding inequality implies that
$$ \P(|\mathcal C_*^{\Lambda_N} \cap \Lambda_{N/4}|  > 32^{-1} K^{-\frac{1-\alpha'}{2}} |\mathcal C_*^{\Lambda_N} \cap \mathcal A_{N/2}|) \geq \frac{1}{32 N^3}\,.$$
Now, set $N_0 = N_0(\epsilon)$ sufficiently large so that 
\begin{equation}\label{eq-N-0}
\frac{1}{10^6 N^3} > \kappa^{-1} e^{- N^{\kappa}} \mbox{and } 32^{-1} K^{-\frac{1-\alpha'}{2}} > \frac{8}{K\Delta} \mbox{ for all } N\geq (N_0)^{\alpha'}\,.
\end{equation} 
Therefore, by Proposition~\ref{prop-crossing-dimension}, there exists at least one instance such that 
$$|\mathcal C_*^{\Lambda_N} \cap \Lambda_{N/4}|  > 32^{-1} K^{-\frac{1-\alpha'}{2}} |\mathcal C_*^{\Lambda_N} \cap \mathcal A_{N/2}|\mbox{ and } d_{\mathcal C_*^{\Lambda_N}}(\partial \Lambda_{N/4}, \partial \Lambda_{N/2}) \geq K\,.$$
This contradicts Lemma~\ref{lem-perturbation}, thus completing the proof of the lemma.
\end{proof}

In the proof of Lemma~\ref{lem-m-N-bound} below, it is important for us to have independence between different scales. To this end, it is useful to consider a perturbation which only occurs in an annulus.
\begin{lemma}\label{lem-m-annulus}
Let $\Delta = (N/16)^{-\alpha(\alpha')^2}$ and define 
\begin{equation}\label{eq-def-tilde-h-annulus}
\tilde h^{(N)}_v = 
\begin{cases}
h_v + \Delta &\mbox{ for } v\in \mathcal A_{N/4}\,, \\
 h_v &\mbox{ for }v\not\in \mathcal A_{N/4}\,.
\end{cases}
\end{equation}
Then there exists $C=C(\epsilon)>0$ such that $\P(o\in \mathcal C_*^{\Lambda^N}) \leq C N^{-5}$. 
\end{lemma}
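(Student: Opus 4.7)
My plan is to derive Lemma~\ref{lem-m-annulus} by localizing the event $\{o \in \mathcal{C}_*^{\Lambda_N}\}$ to square sub-boxes strictly contained in the annulus $\mathcal{A}_{N/4}$, on which the perturbation is uniform, and then invoking Lemma~\ref{lem-m-star} at each of these sub-boxes via translation invariance of the Gaussian field. The extra factor of $N$ compared with the $N^{-6}$ bound of Lemma~\ref{lem-m-star} will come from a union bound over a blocking curve of $O(N)$ vertices sitting in the middle of $\mathcal{A}_{N/4}$.

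\textbf{Key steps.} First, since $\tilde h^{(N)}_v - h_v = \Delta \cdot \mathbf{1}_{v \in \mathcal{A}_{N/4}} \ge 0$, Lemma~\ref{lem-percolation-property} applies to $\mathcal{C}_*^{\Lambda_N}$, so on $\{o \in \mathcal{C}_*^{\Lambda_N}\}$ there is a $\mathcal{C}_*^{\Lambda_N}$-path from $o$ to $\partial \Lambda_N$. I take $\Gamma \df \partial \Lambda_{3N/16 - 1}$ as the blocking curve, so $|w|_\infty = 3N/16$ for every $w \in \Gamma$ and $|\Gamma| = O(N)$; the path must contain some $w \in \Gamma \cap \mathcal{C}_*^{\Lambda_N}$. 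Second, I pick $N^\star$ slightly less than $N/16$ (say $N^\star = N/16 - 1$) and set $B_w \df w + \Lambda_{N^\star}$. A direct calculation from $|w|_\infty = 3N/16$ shows $B_w \subset \mathcal{A}_{N/4}$ strictly, hence $\tilde h^{(N)} \equiv h + \Delta$ on $B_w$, and \eqref{eq-monotonicity} applied in turn to $\mathcal{C}^{\Lambda_N}$ and $\tilde{\mathcal{C}}^{\Lambda_N}$ (with $B' = B_w$) forces $w \in \mathcal{C}_*^{B_w}$. Third, by translation invariance of the Gaussian field together with a slight adaptation of Lemma~\ref{lem-m-star} (discussed below), one obtains $\P(w \in \mathcal{C}_*^{B_w}) \le C(N^\star)^{-6} \le C' N^{-6}$; union-bounding over $w \in \Gamma$ then gives $\P(o \in \mathcal{C}_*^{\Lambda_N}) \le \sum_{w \in \Gamma} \P(w \in \mathcal{C}_*^{B_w}) = O(N) \cdot O(N^{-6}) = O(N^{-5})$, which is the required bound.

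\textbf{Main obstacle.} The hard part is that the calibration between $\Delta$ and the annulus thickness is tight: $\Delta = (N/16)^{-\alpha(\alpha')^2}$ is naturally matched to $N^\star = N/16$ and a box of side $2 \cdot N/16 = N/8$, but $\mathcal{A}_{N/4}$ has thickness exactly $N/8$, so the canonical box just fails to fit strictly inside the annulus. Shrinking $N^\star$ slightly makes $\Delta$ differ from the canonical $(N^\star)^{-\alpha(\alpha')^2}$ only by a bounded constant factor, and one has to verify that Lemma~\ref{lem-m-star}'s proof still delivers the $(N^\star)^{-6}$ bound. The only quantitative use of $\Delta$ there is condition~\eqref{eq-N-0}, namely $\Delta > 256\,K^{-(1+\alpha')/2}$ with $K = (N^\star)^{\alpha\alpha'}$; since the exponents satisfy $\alpha(\alpha')^2 < \alpha\alpha'(1+\alpha')/2$ whenever $\alpha' < 1$, there is polynomial slack in $N^\star$ to absorb the mismatch, and the rest of the recursion is insensitive to bounded changes in the constant. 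Carrying out this check cleanly is the main technical point.
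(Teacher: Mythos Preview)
Your proposal is correct and follows essentially the same route as the paper: a blocking curve on $\partial\Lambda_{3N/16}$, sub-boxes of half-width $\approx N/16$ centered there and contained in $\mathcal A_{N/4}$, monotonicity plus translation invariance to reduce to Lemma~\ref{lem-m-star} at scale $N^\star=N/16$, then a union bound over the $O(N)$ boundary vertices together with Lemma~\ref{lem-percolation-property}. The paper simply takes $N^\star=N/16$ and $B_v=v+\Lambda_{N/16}$ and invokes Lemma~\ref{lem-m-star} verbatim (tacitly ignoring the one-row boundary mismatch you worry about), so your ``main obstacle'' is not addressed there; your more careful treatment via a slightly shrunken $N^\star$ and the slack in \eqref{eq-N-0} is a valid way to make this precise.
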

\begin{proof}
For $v\in \partial \Lambda_{3N/16}$, let $B_v$ be a translated copy of $\Lambda_{N/16}$ centered at $v$. 
Thus, for all $u \in B_v$ we have $\tilde h^{(N)}_u = h_u + (N/16)^{-\alpha(\alpha')^2}$. Recall $m^\star_{N/16}(N/16)$ as in Lemma~\ref{lem-m-star}.  By \eqref{eq-monotonicity} and Lemma~\ref{lem-m-star}, 
$$\P(v\in \mathcal C_*^{\Lambda_N}) \leq m^\star_{N/16}(N/16) \leq C N^{-6}\,.$$
 Hence, 
$\P(\partial \Lambda_{3N/16} \cap \mathcal C_*^{\Lambda_N} \neq \emptyset) \leq C N^{-5}$ by a simple union bound. Combined with Lemma~\ref{lem-percolation-property} (and the simple observation that $o$ cannot percolate in $\mathcal C_*^{\Lambda_N}$ to $\partial \Lambda_N$ if $\partial \Lambda_{3N/16} \cap \mathcal C_*^{\Lambda_N} = \emptyset$), this completes the proof of the lemma.
\end{proof}

\begin{lemma}\label{lem-m-N-bound}
There exists $C= C(\epsilon)>0$ such that  $m_N \leq C N^{-3}$.
\end{lemma}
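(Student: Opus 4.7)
My plan is to combine Lemma~\ref{lem-m-annulus} with a Gaussian change-of-measure / projection argument. Applying Lemma~\ref{lem-m-annulus} with $\Delta = (N/16)^{-\alpha(\alpha')^2}$ and $\tilde h = h + \Delta\1_{\mathcal A_{N/4}}$ yields $\P(o \in \mathcal C_*^{\Lambda_N}) \leqslant CN^{-5}$, and the natural decomposition
\[
m_N = \P(o \in \mathcal C^{\Lambda_N}) = \P(o \in \mathcal C_*^{\Lambda_N}) + \P\bigl(o \in \mathcal C^{\Lambda_N} \setminus \tilde{\mathcal C}^{\Lambda_N}\bigr)
\]
reduces the problem to bounding the ``flip'' term, which by the monotonicity used repeatedly in the paper is precisely $\P(\xi_o^{\Lambda_N} = \zero,\ \tilde\xi_o^{\Lambda_N} = \plus)$: the $+\Delta$ perturbation forces $\sigma^{-,\Lambda_N}_o$ from $-1$ to $+1$ while $\sigma^{+,\Lambda_N}_o$ stays at $+1$.

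To bound this flip probability I would condition on the component $h^{\perp}$ of $h$ orthogonal to $\1_{\mathcal A_{N/4}}$ and work with the remaining one-dimensional Gaussian coordinate $Y = h_{\mathcal A_{N/4}}/\sqrt{|\mathcal A_{N/4}|} \sim N(0,\epsilon^2)$. Monotonicity of $\sigma^{\pm,\Lambda_N}_o$ in $Y$ realises $\{o \in \mathcal C^{\Lambda_N}\}$ as a random interval event $\{Y \in [Y_c^+, Y_c^-)\}$ with thresholds depending only on $h^{\perp}$, and the flip event as $\{Y \in [\max(Y_c^+,\, Y_c^- - L),\ Y_c^-)\}$, where $L = \Delta\sqrt{|\mathcal A_{N/4}|}$. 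Since the Gaussian density of $Y$ is bounded by $1/(\epsilon\sqrt{2\pi})$, integrating out $Y$ gives $\P\bigl(o \in \mathcal C^{\Lambda_N}\setminus \tilde{\mathcal C}^{\Lambda_N}\bigr) \leqslant CL$, and hence
\[
m_N \leqslant CN^{-5} + CL, \qquad L = O\bigl(N^{1-\alpha(\alpha')^2}\bigr).
\]

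The hard step is squeezing the flip term under $CN^{-3}$. The naive projection yields $L \sim N^{1-\alpha(\alpha')^2}$, which beats $N^{-3}$ only once $\alpha(\alpha')^2 \geqslant 4$, whereas Proposition~\ref{prop-crossing-dimension} merely supplies some $\alpha > 1$. I expect this gap to be closed by iterating the projection step along a telescoping sequence of nested annular perturbations at geometrically decreasing scales, so that the effective error becomes a product of many $L$-factors while Lemma~\ref{lem-m-annulus} and the recursion underlying Lemma~\ref{lem-m-star} continue to absorb the union-bound losses at each scale, using that these lemmas in fact give super-polynomial strength. In this way the flip contribution falls below $CN^{-3}$, yielding $m_N \leqslant CN^{-3}$.
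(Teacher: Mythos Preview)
Your plan is essentially the paper's proof. The paper likewise splits through $\mathcal C_*^{\Lambda_N}$, projects onto the one-dimensional Gaussian coordinate $h_{\mathcal A}$ to realise $\{o\in\mathcal C^{\Lambda_N}\}$ as an interval event whose length is controlled by the annular perturbation of Lemma~\ref{lem-m-annulus}, and then iterates over geometrically separated scales $8^\ell$ for $0.9n\le\ell\le n$ (with $N=8^n$), using the independence of the disjoint annular sums $h_{\mathcal A_{2\cdot 8^\ell}}$ to turn the bound into a genuine product of $\sim\log N$ factors, each of size $C\,8^{-\ell(\alpha(\alpha')^2-1)}$. One small correction to your last paragraph: Lemmas~\ref{lem-m-star} and~\ref{lem-m-annulus} themselves give only fixed polynomial bounds ($N^{-6}$, $N^{-5}$); the super-polynomial gain comes entirely from the product over $\sim\log N$ independent scales, while the polynomial strength of Lemma~\ref{lem-m-annulus} is used only to make the union bound $\P\bigl(\cup_\ell\{o\in\mathcal C_*^{\Lambda_{8^\ell}}\}\bigr)\le CN^{-3}$ over those logarithmically many scales go through.
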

\begin{proof}
Without loss of generality let us only consider  $N = 8^n$ for some $n\geq 1$, and define $\tilde h^{(N)}_v$ as in \eqref{eq-def-tilde-h-annulus}.
 Let $E_\ell = \{o\not\in \mathcal C_*^{\Lambda_{8^{\ell}}}\}$ and $E = \cap_{0.9n \leq \ell \leq n} E_\ell$. By Lemma~\ref{lem-m-annulus}, we see that $\P(E^c) \leq CN^{-3}$ for some $C = C(\epsilon)>0$ (whose value may be adjusted later in the proof). For $0.9 n\leq \ell \leq n$, let $\mathcal F_\ell = \sigma(h_v: v\in \Lambda_{8^\ell})$ and write 
\begin{equation}\label{eq-Gaussian-conditioning}
h_v = (|\mathcal A_{2\cdot 8^{\ell}}|)^{-1} h_{\mathcal A_{2\cdot 8^{\ell}}} + g_v \mbox{ for } v\in \mathcal A_{2\cdot 8^{\ell}}\,,
\end{equation}
where $\{g_v: v \in \mathcal A_{2\cdot 8^{\ell}}\}$ is a mean-zero Gaussian process independent of $h_{\mathcal A_{2\cdot 8^{\ell}}}$ and $\{g_v: v \in \mathcal A_{2\cdot 8^{\ell}}\}$ for $0.9 n\leq \ell \leq n$ are mutually independent. Let $\mathcal F'_\ell$ be the $\sigma$-field which contains every event in $\mathcal F_{\ell+1}$ that is independent of $h_{\mathcal A_{2 \cdot 8^{\ell}}}$ (so in particular $\mathcal F_\ell \subset \mathcal F'_\ell$). By monotonicity,  there exists an interval $I_\ell$ measurable with respect to $\mathcal F'_\ell$ such that conditioned on $\mathcal F'_\ell$ we have $o \in \mathcal C^{\Lambda_{8^{\ell+1}}}$ if and only if $h_{\mathcal A_{2 \cdot 8^{\ell}}} \in I_\ell$. Let $I'_\ell$ be the maximal sub-interval of $I_\ell$ which shares the upper endpoint and $|I'_\ell| \leq  \frac{|\mathcal A_{2\cdot 8^\ell}| \cdot 16}{8^{\ell \alpha (\alpha')^2}}$.
By our definition of $E_{\ell+1}$, we see
 from \eqref{eq-Gaussian-conditioning} that  conditioned on $\mathcal F'_\ell$ we have $\{o \in \mathcal C^{\Lambda_{8^{\ell+1}}}\} \cap E_{\ell+1}$ if and only if $h_{\mathcal A_{2 \cdot 8^{\ell}}} \in I'_\ell$. Thus, for $0.9 n \leq \ell \leq n$, 
$$\P(\{o \in \mathcal C^{\Lambda_{8^{\ell+1}}}\} \cap E_{\ell+1} \mid \mathcal F'_\ell) \leq \P(h_{\mathcal A_{2\cdot 8^\ell}}\in I'_\ell)\,.$$
Combined with the fact that $\var (h_{\mathcal A_{2\cdot 8^\ell}}) = \epsilon^2 |\mathcal A_{2\cdot 8^\ell}|$, this gives that
$$\P(\{o \in \mathcal C^{\Lambda_{8^{\ell+1}}}\} \cap E_{\ell+1} \mid \mathcal F'_\ell) \leq \frac{C}{8^{\ell(\alpha (\alpha')^2-1)}}\,.$$
Since $\{o \in \mathcal C^{\Lambda_{8^{n}}}\} \cap E = \cap_{\ell = 0.9n}^{n-1} (\{o \in \mathcal C^{\Lambda_{8^{\ell+1}}}\} \cap E_{\ell+1})$ and since $\{o \in \mathcal C^{\Lambda_{8^{\ell}}}\} \cap E_{\ell}$ is $\mathcal F_\ell$-measurable (and thus is $\mathcal F'_\ell$-measurable), we deduce that $\P(o\in \mathcal C^{\Lambda_N} \cap E) \leq C N^{-3}$. Combined with the fact that $\P(E^c) \leq CN^{-3}$, it completes the proof of the lemma.
\end{proof}

\begin{proof}[Proof of Theorem~\ref{thm-main}]
Let $N_0 = N_0(\epsilon)$ be chosen later. For $B\in \mathcal B(N, N_0)$, we say $B$ is open if $\mathcal C^{B^{\mathrm{large}}} \cap B \neq \emptyset$. Clearly, this percolation process satisfies the $(N, N_0, 4, p)$-condition where 
\begin{equation}\label{eq-p-bound}
p  = \P(\mathcal C^{B^{\mathrm{large}}} \cap B \neq \emptyset) \leq N_0^2 m_{N_0/2} \leq CN_0^{-1}  \mbox{ for } C = C(\epsilon)>0\,.
\end{equation}
(The last transition above follows from Lemma~\ref{lem-m-N-bound}.)
In addition, we note that in order for $o\in \mathcal C^{\Lambda_N}$, it is necessary that there exists an open lattice animal on $B\in \mathcal B(N, N_0)$ with size at least $\frac{N}{10N_0}$. Now, choosing $N_0$ sufficiently large (so that $p$ is sufficiently small, by \eqref{eq-p-bound}) and applying Lemma~\ref{lem-enhance} completes the proof.
\end{proof}

\noindent {\bf Acknowledgement.} We thank Tom Spencer for introducing the problem to us, thank Steve Lalley for many interesting discussions and thank Subhajit Goswami, Steve Lalley for a careful reading of an earlier version of the manuscript.

\small

\end{document}